\theoremstyle{plain}
\newtheorem{theorem}{Theorem}
\newtheorem{axiom}{Axiom}
\newtheorem{conjecture}{Conjecture}
\newtheorem{corollary}[theorem]{Corollary}
\newtheorem{lemma}[theorem]{Lemma}
\newtheorem{proposition}[theorem]{Proposition}
\newtheorem{definition}[theorem]{Definition}
\theoremstyle{definition}
\newtheorem*{notation}{Notation}
\newtheorem*{acknowledgements}{Acknowledgements}
\newtheorem{example}[theorem]{Example}
\newtheorem{exercise}{Exercise}
\newtheorem{remark}{Remark}
\theoremstyle{remark}
\numberwithin{equation}{section}
\let\pdfoutput=\undefined\fi
\chardef\@x10\chardef\@xv60
\def\tcitime{
\def\@time{%
  \@minute\time\@hour\@minute\divide\@hour\@xv
  \ifnum\@hour<\@x 0\fi\the\@hour:%
  \multiply\@hour\@xv\advance\@minute-\@hour
  \ifnum\@minute<\@x 0\fi\the\@minute
  }}%
\def\x@hyperref#1#2#3{%
   \catcode`\~ = 12
   \catcode`\$ = 12
   \catcode`\_ = 12
   \catcode`\# = 12
   \catcode`\& = 12
   \y@hyperref{#1}{#2}{#3}%
}
\def\y@hyperref#1#2#3#4{%
   #2\ref{#4}#3
   \catcode`\~ = 13
   \catcode`\$ = 3
   \catcode`\_ = 8
   \catcode`\# = 6
   \catcode`\& = 4
}
\def\QCTOpt[#1]#2{%
  \def\QCTOptB{#1}
  \def\QCTOptA{#2}
}
\def\QCTNOpt#1{%
  \def\QCTOptA{#1}
  \let\QCTOptB\empty
}
\def\Qct{%
  \@ifnextchar[{%
    \QCTOpt}{\QCTNOpt}
}
\def\QCBOpt[#1]#2{%
  \def\QCBOptB{#1}%
  \def\QCBOptA{#2}%
}
\def\QCBNOpt#1{%
  \def\QCBOptA{#1}%
  \let\QCBOptB\empty
}
\def\Qcb{%
  \@ifnextchar[{%
    \QCBOpt}{\QCBNOpt}%
}
\def\PrepCapArgs{%
  \ifx\QCBOptA\empty
    \ifx\QCTOptA\empty
      {}%
    \else
      \ifx\QCTOptB\empty
        {\QCTOptA}%
      \else
        [\QCTOptB]{\QCTOptA}%
      \fi
    \fi
  \else
    \ifx\QCBOptA\empty
      {}%
    \else
      \ifx\QCBOptB\empty
        {\QCBOptA}%
      \else
        [\QCBOptB]{\QCBOptA}%
      \fi
    \fi
  \fi
}
\def\GRAPHICSPS#1{%
 \ifcase\GRAPHICSTYPE
   \special{ps: #1}%
 \or
   \special{language "PS", include "#1"}%
 \fi
}%
\def\graffile#1#2#3#4{%
    \bgroup
	   \@inlabelfalse
       \leavevmode
       \@ifundefined{bbl@deactivate}{\def~{\string~}}{\activesoff}%
        \raise -#4 \BOXTHEFRAME{%
           \hbox to #2{\raise #3\hbox to #2{\null #1\hfil}}}%
    \egroup
}%
\def\draftbox#1#2#3#4{%
 \leavevmode\raise -#4 \hbox{%
  \frame{\rlap{\protect\tiny #1}\hbox to #2%
   {\vrule height#3 width\z@ depth\z@\hfil}%
  }%
 }%
}%
\let\nographics=\@msidraft
\newif\ifwasdraft
\def\GRAPHIC#1#2#3#4#5{%
   \ifnum\@msidraft=\@ne\draftbox{#2}{#3}{#4}{#5}%
   \else\graffile{#1}{#3}{#4}{#5}%
   \fi
}
\def\addtoLaTeXparams#1{%
    \edef\LaTeXparams{\LaTeXparams #1}}%
\newif\ifBoxFrame \BoxFramefalse
\newif\ifOverFrame \OverFramefalse
\newif\ifUnderFrame \UnderFramefalse
\def\BOXTHEFRAME#1{%
   \hbox{%
      \ifBoxFrame
         \frame{#1}%
      \else
         {#1}%
      \fi
   }%
}
\def\doFRAMEparams#1{\BoxFramefalse\OverFramefalse\UnderFramefalse\readFRAMEparams#1\end}%
\def\readFRAMEparams#1{%
 \ifx#1\end%
  \let\next=\relax
  \else
  \ifx#1i\dispkind=\z@\fi
  \ifx#1d\dispkind=\@ne\fi
  \ifx#1f\dispkind=\tw@\fi
  \ifx#1t\addtoLaTeXparams{t}\fi
  \ifx#1b\addtoLaTeXparams{b}\fi
  \ifx#1p\addtoLaTeXparams{p}\fi
  \ifx#1h\addtoLaTeXparams{h}\fi
  \ifx#1X\BoxFrametrue\fi
  \ifx#1O\OverFrametrue\fi
  \ifx#1U\UnderFrametrue\fi
  \ifx#1w
    \ifnum\@msidraft=1\wasdrafttrue\else\wasdraftfalse\fi
    \@msidraft=\@ne
  \fi
  \let\next=\readFRAMEparams
  \fi
 \next
 }%
\def\IFRAME#1#2#3#4#5#6{%
      \bgroup
      \let\QCTOptA\empty
      \let\QCTOptB\empty
      \let\QCBOptA\empty
      \let\QCBOptB\empty
      #6%
      \parindent=0pt
      \leftskip=0pt
      \rightskip=0pt
      \setbox0=\hbox{\QCBOptA}%
      \@tempdima=#1\relax
      \ifOverFrame
          \typeout{This is not implemented yet}%
          \show\HELP
      \else
         \ifdim\wd0>\@tempdima
            \advance\@tempdima by \@tempdima
            \ifdim\wd0 >\@tempdima
               \setbox1 =\vbox{%
                  \unskip\hbox to \@tempdima{\hfill\GRAPHIC{#5}{#4}{#1}{#2}{#3}\hfill}%
                  \unskip\hbox to \@tempdima{\parbox[b]{\@tempdima}{\QCBOptA}}%
               }%
               \wd1=\@tempdima
            \else
               \textwidth=\wd0
               \setbox1 =\vbox{%
                 \noindent\hbox to \wd0{\hfill\GRAPHIC{#5}{#4}{#1}{#2}{#3}\hfill}\\%
                 \noindent\hbox{\QCBOptA}%
               }%
               \wd1=\wd0
            \fi
         \else
            \ifdim\wd0>0pt
              \hsize=\@tempdima
              \setbox1=\vbox{%
                \unskip\GRAPHIC{#5}{#4}{#1}{#2}{0pt}%
                \break
                \unskip\hbox to \@tempdima{\hfill \QCBOptA\hfill}%
              }%
              \wd1=\@tempdima
           \else
              \hsize=\@tempdima
              \setbox1=\vbox{%
                \unskip\GRAPHIC{#5}{#4}{#1}{#2}{0pt}%
              }%
              \wd1=\@tempdima
           \fi
         \fi
         \@tempdimb=\ht1
         \advance\@tempdimb by -#2
         \advance\@tempdimb by #3
         \leavevmode
         \raise -\@tempdimb \hbox{\box1}%
      \fi
      \egroup%
}%
\def\DFRAME#1#2#3#4#5{%
  \vspace\topsep
  \hfil\break
  \bgroup
     \leftskip\@flushglue
	 \rightskip\@flushglue
	 \parindent\z@
	 \parfillskip\z@skip
     \let\QCTOptA\empty
     \let\QCTOptB\empty
     \let\QCBOptA\empty
     \let\QCBOptB\empty
	 \vbox\bgroup
        \ifOverFrame 
           #5\QCTOptA\par
        \fi
        \GRAPHIC{#4}{#3}{#1}{#2}{\z@}%
        \ifUnderFrame 
           \break#5\QCBOptA
        \fi
	 \egroup
  \egroup
  \vspace\topsep
  \break
}%
\def\FFRAME#1#2#3#4#5#6#7{%
  \@ifundefined{floatstyle}
    {
     \begin{figure}[#1]%
    }
    {
	 \ifx#1h
      \begin{figure}[H]%
	 \else
      \begin{figure}[#1]%
	 \fi
	}
  \let\QCTOptA\empty
  \let\QCTOptB\empty
  \let\QCBOptA\empty
  \let\QCBOptB\empty
  \ifOverFrame
    #4
    \ifx\QCTOptA\empty
    \else
      \ifx\QCTOptB\empty
        \caption{\QCTOptA}%
      \else
        \caption[\QCTOptB]{\QCTOptA}%
      \fi
    \fi
    \ifUnderFrame\else
      \label{#5}%
    \fi
  \else
    \UnderFrametrue%
  \fi
  \begin{center}\GRAPHIC{#7}{#6}{#2}{#3}{\z@}\end{center}%
  \ifUnderFrame
    #4
    \ifx\QCBOptA\empty
      \caption{}%
    \else
      \ifx\QCBOptB\empty
        \caption{\QCBOptA}%
      \else
        \caption[\QCBOptB]{\QCBOptA}%
      \fi
    \fi
    \label{#5}%
  \fi
  \end{figure}%
 }%
\def\makeactives{
  \catcode`\"=\active
  \catcode`\;=\active
  \catcode`\:=\active
  \catcode`\'=\active
  \catcode`\~=\active
}
   \gdef\activesoff{%
      \def"{\string"}%
      \def;{\string;}%
      \def:{\string:}%
      \def'{\string'}%
      \def~{\string~}%
    }
\def\FRAME#1#2#3#4#5#6#7#8{%
 \bgroup
 \ifnum\@msidraft=\@ne
   \wasdrafttrue
 \else
   \wasdraftfalse%
 \fi
 \def\LaTeXparams{}%
 \dispkind=\z@
 \def\LaTeXparams{}%
 \doFRAMEparams{#1}%
 \ifnum\dispkind=\z@\IFRAME{#2}{#3}{#4}{#7}{#8}{#5}\else
  \ifnum\dispkind=\@ne\DFRAME{#2}{#3}{#7}{#8}{#5}\else
   \ifnum\dispkind=\tw@
    \edef\@tempa{\noexpand\FFRAME{\LaTeXparams}}%
    \@tempa{#2}{#3}{#5}{#6}{#7}{#8}%
    \fi
   \fi
  \fi
  \ifwasdraft\@msidraft=1\else\@msidraft=0\fi{}%
  \egroup
 }%
\def\TEXUX#1{"texux"}
\def\limfunc#1{\mathop{\rm #1}}%
\long\def\QQQ#1#2{%
     \long\expandafter\def\csname#1\endcsname{#2}}%
\long\def\QQA#1#2{}%
\def\QTR#1#2{{\csname#1\endcsname {#2}}}%
\def\EXPAND#1[#2]#3{}%
\def\NOEXPAND#1[#2]#3{}%
\def\LaTeXparent#1{}%
\def\ChildStyles#1{}%
\def\ChildDefaults#1{}%
\def\QTagDef#1#2#3{}%
  \providecommand{\UNICODE}[2][]{\protect\rule{.1in}{.1in}}
  \providecommand{\U}[1]{\protect\rule{.1in}{.1in}}
\def\QQfnmark#1{\footnotemark}
 \def\abstract{%
  \if@twocolumn
   \section*{Abstract (Not appropriate in this style!)}%
   \else \small 
   \begin{center}{\bf Abstract\vspace{-.5em}\vspace{\z@}}\end{center}%
   \quotation 
   \fi
  }%
   \def\registered{\relax\ifmmode{}\r@gistered
                    \else$\m@th\r@gistered$\fi}%
 \def\r@gistered{^{\ooalign
  {\hfil\raise.07ex\hbox{$\scriptstyle\rm\text{R}$}\hfil\crcr
  \mathhexbox20D}}}}{}%
\newdimen\theight
\def\newfmtname{LaTeX2e}
  \DeclareOldFontCommand{\rm}{\normalfont\rmfamily}{\mathrm}
  \DeclareOldFontCommand{\sf}{\normalfont\sffamily}{\mathsf}
  \DeclareOldFontCommand{\tt}{\normalfont\ttfamily}{\mathtt}
  \DeclareOldFontCommand{\bf}{\normalfont\bfseries}{\mathbf}
  \DeclareOldFontCommand{\it}{\normalfont\itshape}{\mathit}
  \DeclareOldFontCommand{\sl}{\normalfont\slshape}{\@nomath\sl}
  \DeclareOldFontCommand{\sc}{\normalfont\scshape}{\@nomath\sc}
\def\alpha{{\Greekmath 010B}}%
\def\beta{{\Greekmath 010C}}%
\def\gamma{{\Greekmath 010D}}%
\def\delta{{\Greekmath 010E}}%
\def\epsilon{{\Greekmath 010F}}%
\def\zeta{{\Greekmath 0110}}%
\def\eta{{\Greekmath 0111}}%
\def\theta{{\Greekmath 0112}}%
\def\iota{{\Greekmath 0113}}%
\def\kappa{{\Greekmath 0114}}%
\def\lambda{{\Greekmath 0115}}%
\def\mu{{\Greekmath 0116}}%
\def\nu{{\Greekmath 0117}}%
\def\xi{{\Greekmath 0118}}%
\def\pi{{\Greekmath 0119}}%
\def\rho{{\Greekmath 011A}}%
\def\sigma{{\Greekmath 011B}}%
\def\tau{{\Greekmath 011C}}%
\def\upsilon{{\Greekmath 011D}}%
\def\phi{{\Greekmath 011E}}%
\def\chi{{\Greekmath 011F}}%
\def\psi{{\Greekmath 0120}}%
\def\omega{{\Greekmath 0121}}%
\def\varepsilon{{\Greekmath 0122}}%
\def\vartheta{{\Greekmath 0123}}%
\def\varpi{{\Greekmath 0124}}%
\def\varrho{{\Greekmath 0125}}%
\def\varsigma{{\Greekmath 0126}}%
\def\varphi{{\Greekmath 0127}}%
\def\nabla{{\Greekmath 0272}}
\def\FindBoldGroup{%
   {\setbox0=\hbox{$\mathbf{x\global\edef\theboldgroup{\the\mathgroup}}$}}%
}
\def\Greekmath#1#2#3#4{%
    \if@compatibility
        \ifnum\mathgroup=\symbold
           \mathchoice{\mbox{\boldmath$\displaystyle\mathchar"#1#2#3#4$}}%
                      {\mbox{\boldmath$\textstyle\mathchar"#1#2#3#4$}}%
                      {\mbox{\boldmath$\scriptstyle\mathchar"#1#2#3#4$}}%
                      {\mbox{\boldmath$\scriptscriptstyle\mathchar"#1#2#3#4$}}%
        \else
           \mathchar"#1#2#3#4%
        \fi 
    \else 
        \FindBoldGroup
        \ifnum\mathgroup=\theboldgroup 
           \mathchoice{\mbox{\boldmath$\displaystyle\mathchar"#1#2#3#4$}}%
                      {\mbox{\boldmath$\textstyle\mathchar"#1#2#3#4$}}%
                      {\mbox{\boldmath$\scriptstyle\mathchar"#1#2#3#4$}}%
                      {\mbox{\boldmath$\scriptscriptstyle\mathchar"#1#2#3#4$}}%
        \else
           \mathchar"#1#2#3#4%
        \fi     	    
	  \fi}
\newif\ifGreekBold  \GreekBoldfalse
\let\SAVEPBF=\pbf
\def\pbf{\GreekBoldtrue\SAVEPBF}%
  \newcounter{equationnumber}  
  \def\mathletters{%
     \addtocounter{equation}{1}
     \edef\@currentlabel{\theequation}%
     \setcounter{equationnumber}{\c@equation}
     \setcounter{equation}{0}%
     \edef\theequation{\@currentlabel\noexpand\alph{equation}}%
  }
    \def\BibTeX{{\rm B\kern-.05em{\sc i\kern-.025em b}\kern-.08em
                 T\kern-.1667em\lower.7ex\hbox{E}\kern-.125emX}}}{}%
\def\AmS{{\protect\usefont{OMS}{cmsy}{m}{n}%
                A\kern-.1667em\lower.5ex\hbox{M}\kern-.125emS}}}{}%
\def\@@eqncr{\let\@tempa\relax
    \ifcase\@eqcnt \def\@tempa{& & &}\or \def\@tempa{& &}%
      \else \def\@tempa{&}\fi
     \@tempa
     \if@eqnsw
        \iftag@
           \@taggnum
        \else
           \@eqnnum\stepcounter{equation}%
        \fi
     \fi
     \global\tag@false
     \global\@eqnswtrue
     \global\@eqcnt\z@\cr}
\def\TCItag{\@ifnextchar*{\@TCItagstar}{\@TCItag}}
\def\@TCItag#1{%
    \global\tag@true
    \global\def\@taggnum{(#1)}%
    \global\def\@currentlabel{#1}}
\def\@TCItagstar*#1{%
    \global\tag@true
    \global\def\@taggnum{#1}%
    \global\def\@currentlabel{#1}}
\def\tsum{\mathop{\textstyle \sum }}%
\def\tprod{\mathop{\textstyle \prod }}%
\def\tcoprod{\mathop{\textstyle \coprod }}%
\def\ExitTCILatex{\makeatother }
\if@compatibility\message{amsmath already loaded}\fi\aftergroup\ExitTCILatex}
\if@compatibility\message{amstex already loaded}\fi\aftergroup\ExitTCILatex}
\if@compatibility\message{amsgen already loaded}\fi\aftergroup\ExitTCILatex}
\let\DOTSI\relax
\def\RIfM@{\relax\ifmmode}%
\def\FN@{\futurelet\next}%
\def\iint{\DOTSI\intno@\tw@\FN@\ints@}%
\def\iiint{\DOTSI\intno@\thr@@\FN@\ints@}%
\def\iiiint{\DOTSI\intno@4 \FN@\ints@}%
\def\idotsint{\DOTSI\intno@\z@\FN@\ints@}%
\def\ints@{\findlimits@\ints@@}%
\newif\iflimtoken@
\newif\iflimits@
\def\findlimits@{\limtoken@true\ifx\next\limits\limits@true
 \else\ifx\next\nolimits\limits@false\else
 \limtoken@false\ifx\ilimits@\nolimits\limits@false\else
 \ifinner\limits@false\else\limits@true\fi\fi\fi\fi}%
\def\multint@{\int\ifnum\intno@=\z@\intdots@                          
 \else\intkern@\fi                                                    
 \ifnum\intno@>\tw@\int\intkern@\fi                                   
 \ifnum\intno@>\thr@@\int\intkern@\fi                                 
 \int}
\def\multintlimits@{\intop\ifnum\intno@=\z@\intdots@\else\intkern@\fi
 \ifnum\intno@>\tw@\intop\intkern@\fi
 \ifnum\intno@>\thr@@\intop\intkern@\fi\intop}%
\def\intic@{%
    \mathchoice{\hskip.5em}{\hskip.4em}{\hskip.4em}{\hskip.4em}}%
\def\negintic@{\mathchoice
 {\hskip-.5em}{\hskip-.4em}{\hskip-.4em}{\hskip-.4em}}%
\def\ints@@{\iflimtoken@                                              
 \def\ints@@@{\iflimits@\negintic@
   \mathop{\intic@\multintlimits@}\limits                             
  \else\multint@\nolimits\fi                                          
  \eat@}
 \else                                                                
 \def\ints@@@{\iflimits@\negintic@
  \mathop{\intic@\multintlimits@}\limits\else
  \multint@\nolimits\fi}\fi\ints@@@}%
\def\intkern@{\mathchoice{\!\!\!}{\!\!}{\!\!}{\!\!}}%
\def\plaincdots@{\mathinner{\cdotp\cdotp\cdotp}}%
\def\intdots@{\mathchoice{\plaincdots@}%
 {{\cdotp}\mkern1.5mu{\cdotp}\mkern1.5mu{\cdotp}}%
 {{\cdotp}\mkern1mu{\cdotp}\mkern1mu{\cdotp}}%
 {{\cdotp}\mkern1mu{\cdotp}\mkern1mu{\cdotp}}}%
\def\RIfM@{\relax\protect\ifmmode}
\def\text{\RIfM@\expandafter\text@\else\expandafter\mbox\fi}
\let\nfss@text\text
\def\text@#1{\mathchoice
   {\textdef@\displaystyle\f@size{#1}}%
   {\textdef@\textstyle\tf@size{\firstchoice@false #1}}%
   {\textdef@\textstyle\sf@size{\firstchoice@false #1}}%
   {\textdef@\textstyle \ssf@size{\firstchoice@false #1}}%
   \glb@settings}
\def\textdef@#1#2#3{\hbox{{%
                    \everymath{#1}%
                    \let\f@size#2\selectfont
                    #3}}}
\newif\iffirstchoice@
\def\Let@{\relax\iffalse{\fi\let\\=\cr\iffalse}\fi}%
\def\vspace@{\def\vspace##1{\crcr\noalign{\vskip##1\relax}}}%
\def\multilimits@{\bgroup\vspace@\Let@
 \baselineskip\fontdimen10 \scriptfont\tw@
 \advance\baselineskip\fontdimen12 \scriptfont\tw@
 \lineskip\thr@@\fontdimen8 \scriptfont\thr@@
 \lineskiplimit\lineskip
 \vbox\bgroup\ialign\bgroup\hfil$\m@th\scriptstyle{##}$\hfil\crcr}%
\def\Sb{_\multilimits@}%
\def\endSb{\crcr\egroup\egroup\egroup}%
\def\Sp{^\multilimits@}%
\newdimen\ex@
\def\rightarrowfill@#1{$#1\m@th\mathord-\mkern-6mu\cleaders
 \hbox{$#1\mkern-2mu\mathord-\mkern-2mu$}\hfill
 \mkern-6mu\mathord\rightarrow$}%
\def\leftarrowfill@#1{$#1\m@th\mathord\leftarrow\mkern-6mu\cleaders
 \hbox{$#1\mkern-2mu\mathord-\mkern-2mu$}\hfill\mkern-6mu\mathord-$}%
\def\leftrightarrowfill@#1{$#1\m@th\mathord\leftarrow
\mkern-6mu\cleaders
 \hbox{$#1\mkern-2mu\mathord-\mkern-2mu$}\hfill
 \mkern-6mu\mathord\rightarrow$}%
\def\overrightarrow{\mathpalette\overrightarrow@}%
\def\overrightarrow@#1#2{\vbox{\ialign{##\crcr\rightarrowfill@#1\crcr
 \noalign{\kern-\ex@\nointerlineskip}$\m@th\hfil#1#2\hfil$\crcr}}}%
\def\overleftarrow{\mathpalette\overleftarrow@}%
\def\overleftarrow@#1#2{\vbox{\ialign{##\crcr\leftarrowfill@#1\crcr
 \noalign{\kern-\ex@\nointerlineskip}$\m@th\hfil#1#2\hfil$\crcr}}}%
\def\overleftrightarrow{\mathpalette\overleftrightarrow@}%
\def\overleftrightarrow@#1#2{\vbox{\ialign{##\crcr
   \leftrightarrowfill@#1\crcr
 \noalign{\kern-\ex@\nointerlineskip}$\m@th\hfil#1#2\hfil$\crcr}}}%
\def\underrightarrow{\mathpalette\underrightarrow@}%
\def\underrightarrow@#1#2{\vtop{\ialign{##\crcr$\m@th\hfil#1#2\hfil
  $\crcr\noalign{\nointerlineskip}\rightarrowfill@#1\crcr}}}%
\def\underleftarrow{\mathpalette\underleftarrow@}%
\def\underleftarrow@#1#2{\vtop{\ialign{##\crcr$\m@th\hfil#1#2\hfil
  $\crcr\noalign{\nointerlineskip}\leftarrowfill@#1\crcr}}}%
\def\underleftrightarrow{\mathpalette\underleftrightarrow@}%
\def\underleftrightarrow@#1#2{\vtop{\ialign{##\crcr$\m@th
  \hfil#1#2\hfil$\crcr
 \noalign{\nointerlineskip}\leftrightarrowfill@#1\crcr}}}%
\def\qopnamewl@#1{\mathop{\operator@font#1}\nlimits@}
\let\nlimits@\displaylimits
\def\setboxz@h{\setbox\z@\hbox}
\def\varlim@#1#2{\mathop{\vtop{\ialign{##\crcr
 \hfil$#1\m@th\operator@font lim$\hfil\crcr
 \noalign{\nointerlineskip}#2#1\crcr
 \noalign{\nointerlineskip\kern-\ex@}\crcr}}}}
 \def\rightarrowfill@#1{\m@th\setboxz@h{$#1-$}\ht\z@\z@
  $#1\copy\z@\mkern-6mu\cleaders
  \hbox{$#1\mkern-2mu\box\z@\mkern-2mu$}\hfill
  \mkern-6mu\mathord\rightarrow$}
\def\leftarrowfill@#1{\m@th\setboxz@h{$#1-$}\ht\z@\z@
  $#1\mathord\leftarrow\mkern-6mu\cleaders
  \hbox{$#1\mkern-2mu\copy\z@\mkern-2mu$}\hfill
  \mkern-6mu\box\z@$}
\def\projlim{\qopnamewl@{proj\,lim}}
\def\injlim{\qopnamewl@{inj\,lim}}
\def\varinjlim{\mathpalette\varlim@\rightarrowfill@}
\def\varprojlim{\mathpalette\varlim@\leftarrowfill@}
\def\varliminf{\mathpalette\varliminf@{}}
\def\varliminf@#1{\mathop{\underline{\vrule\@depth.2\ex@\@width\z@
   \hbox{$#1\m@th\operator@font lim$}}}}
\def\varlimsup{\mathpalette\varlimsup@{}}
\def\varlimsup@#1{\mathop{\overline
  {\hbox{$#1\m@th\operator@font lim$}}}}
\def\align{\@verbatim \frenchspacing\@vobeyspaces \@alignverbatim
You are using the "align" environment in a style in which it is not defined.}
\let\csname endalign*\endcsname =\endtrivlist
\def\alignat{\@verbatim \frenchspacing\@vobeyspaces \@alignatverbatim
You are using the "alignat" environment in a style in which it is not defined.}
\let\csname endalignat*\endcsname =\endtrivlist
\def\xalignat{\@verbatim \frenchspacing\@vobeyspaces \@xalignatverbatim
You are using the "xalignat" environment in a style in which it is not defined.}
\let\csname endxalignat*\endcsname =\endtrivlist
\def\gather{\@verbatim \frenchspacing\@vobeyspaces \@gatherverbatim
You are using the "gather" environment in a style in which it is not defined.}
\let\csname endgather*\endcsname =\endtrivlist
\def\multiline{\@verbatim \frenchspacing\@vobeyspaces \@multilineverbatim
You are using the "multiline" environment in a style in which it is not defined.}
\let\csname endmultiline*\endcsname =\endtrivlist
\def\arrax{\@verbatim \frenchspacing\@vobeyspaces \@arraxverbatim
You are using a type of "array" construct that is only allowed in AmS-LaTeX.}
\def\tabulax{\@verbatim \frenchspacing\@vobeyspaces \@tabulaxverbatim
You are using a type of "tabular" construct that is only allowed in AmS-LaTeX.}
\let\csname endarrax*\endcsname =\endtrivlist
\let\csname endtabulax*\endcsname =\endtrivlist
 \def\endequation{%
     \ifmmode\ifinner 
      \iftag@
        \addtocounter{equation}{-1} 
        $\hfil
           \displaywidth\linewidth\@taggnum\egroup \endtrivlist
        \global\tag@false
        \global\@ignoretrue   
      \else
        $\hfil
           \displaywidth\linewidth\@eqnnum\egroup \endtrivlist
        \global\tag@false
        \global\@ignoretrue 
      \fi
     \else   
      \iftag@
        \addtocounter{equation}{-1} 
        \eqno \hbox{\@taggnum}
        \global\tag@false%
        $$\global\@ignoretrue
      \else
        \eqno \hbox{\@eqnnum}
        $$\global\@ignoretrue
      \fi
     \fi\fi
 } 
 \newif\iftag@ \tag@false
 \def\TCItag{\@ifnextchar*{\@TCItagstar}{\@TCItag}}
 \def\@TCItag#1{%
     \global\tag@true
     \global\def\@taggnum{(#1)}%
     \global\def\@currentlabel{#1}}
 \def\@TCItagstar*#1{%
     \global\tag@true
     \global\def\@taggnum{#1}%
     \global\def\@currentlabel{#1}}
     \def\tag{\@ifnextchar*{\@tagstar}{\@tag}}
     \def\@tag#1{%
         \global\tag@true
         \global\def\@taggnum{(#1)}}
     \def\@tagstar*#1{%
         \global\tag@true
         \global\def\@taggnum{#1}}
\def\tfrac#1#2{{\textstyle {#1 \over #2}}}%
\begin{document}

\title[Two-dimensional id\`{e}les]{Two-dimensional id\`{e}les with
cycle module coefficients}
\author[O. Braunling]{Oliver Braunling}
\keywords{ideles, cycle module}
\address{Fakult\"{a}t f\"{u}r Mathematik, University of Duisburg-Essen,
Thea-Leymann-Stra\ss e 9, 45127 Essen, Germany\newline
oliver.braeunling@uni-due.de}
\thanks{This work has been partially supported by the DFG SFB/TR45
\textquotedblleft {Periods, moduli spaces, and arithmetic of algebraic
varieties}\textquotedblright and the Alexander von Humboldt Foundation.}
\subjclass[msc2010]{Primary 11R56; Secondary 11G45}

\begin{abstract}
We give a theory of id\`{e}les with coefficients for smooth surfaces over a
field. It is an analogue of Beilinson/Huber's theory of higher ad\`{e}les,
but handling cycle module sheaves instead of quasi-coherent ones. We prove
that they give a flasque resolution of the cycle module sheaves in the
Zariski topology. As a technical ingredient we show the Gersten property for
cycle modules on equicharacteristic complete regular local rings, which
might be of independent interest.
\end{abstract}

\maketitle

\section{\label{section_Motivation}Motivation}

Let $K$ be a number field, $\mathbf{A}_{K}$ its ad\`{e}les and $\mathbf{I}%
_{K}$ its id\`{e}les. Then the key invariants, the group of units and the
ideal class group are given by%
\begin{equation}
\mathcal{O}_{K}^{\times }=\mathbf{I}_{K}^{0}\cap K^{\times }\qquad \text{and}%
\qquad \limfunc{Cl}\nolimits_{K}=\mathbf{I}_{K}/(\mathbf{I}_{K}^{0}\cdot
K^{\times })\text{,}  \label{lX_1}
\end{equation}%
where $\mathbf{I}_{K}^{0}$ denotes the integral id\`{e}les. Many other
invariants can also be expressed through ad\`{e}les and id\`{e}les (e.g. $L$%
-function, discriminant, etc.), but let us stick to the above two for the
moment. Using the number field/function field dictionary, these facts
readily carry over to smooth curves over finite fields, and in fact to
smooth curves $X/k$ over arbitrary fields. Viewing $X$ as a scheme, we may
define sheaf versions, namely%
\begin{equation}
\mathbf{I}^{0}(U):=\tprod\nolimits_{x\in U^{(1)}}\widehat{\mathcal{O}}%
_{x}^{\times }\qquad \text{and}\qquad \mathbf{I}(U):=\left.
\tprod\nolimits_{x\in U^{(1)}}^{\prime }\right. \widehat{K}_{x}^{\times }%
\text{,}  \label{lX_2}
\end{equation}%
where $U$ is a Zariski open, $U^{(1)}$ the set of closed points in $U$, $%
\widehat{\mathcal{O}}_{x}$ the completed local ring at $x$, $\widehat{K}_{x}$
its field of fractions and the prime superscript in $\prod^{\prime }$ means
that we restrict to elements such that all but finitely many components lie
in the subgroup $\widehat{\mathcal{O}}_{x}^{\times }\subseteq \widehat{K}%
_{x}^{\times }$. The $\widehat{K}_{x}$ are local fields (in this text a 
\textit{local field} refers to a complete discrete valuation field); here
all of the shape of Laurent series fields%
\begin{equation}
\kappa ((t))\text{,}\qquad \text{while}\qquad \mathbf{Q}_{p},\quad \mathbf{R}
\label{lX_4}
\end{equation}%
and their finite extensions are the local fields appearing in the classical
theory for number fields. One obtains an exact sequence of sheaves of
abelian groups,%
\begin{equation}
0\longrightarrow \mathcal{O}_{X}^{\times }\overset{\limfunc{diag}}{%
\longrightarrow }\mathbf{I}^{0}\oplus \mathcal{K}^{\times }\overset{\limfunc{%
diff}}{\longrightarrow }\mathbf{I}\longrightarrow 0\text{,}  \label{lX_3}
\end{equation}%
where $\mathcal{K}$ denotes the sheaf of rational functions. In fact this is
a flasque resolution of $\mathcal{O}_{X}^{\times }$ and thus%
\begin{equation*}
H^{0}(X,\mathcal{O}_{X}^{\times })=\mathbf{I}^{0}(X)\cap K^{\times }\qquad 
\text{and}\qquad H^{1}(X,\mathcal{O}_{X}^{\times })=\mathbf{I}(X)/(\mathbf{I}%
^{0}(X)\cdot K^{\times })\text{.}
\end{equation*}%
In view of $H^{1}(X,\mathcal{O}_{X}^{\times })=\limfunc{Pic}X\cong \limfunc{%
Cl}X$ we have recovered a perfect analogue of eq. \ref{lX_1}. Although it
might seem unnecessarily fancy, let us write $\limfunc{CH}^{1}(X)$ instead
of $\limfunc{Cl}(X)$ from now on. Without needing to change definitions, the
above also works for smooth surfaces (indeed in any dimension). That should
make us curious, since for smooth surfaces we have the intersection pairing%
\begin{equation}
\xymatrix{ \limfunc{CH}\nolimits^{1}(X) \otimes _{\mathbf{Z}}
\limfunc{CH}\nolimits^{1}(X) \ar[r] \ar[d] & \limfunc{CH}\nolimits^{2}(X)
\ar[d]^-{b_?} & \text{\textquotedblleft Chow side\textquotedblright } \\
{\frac{\mathbf{I}(X)}{\mathbf{I}^{0}(X)\cdot K^{\times }}} \otimes
_{\mathbf{Z}} {\frac{\mathbf{I}(X)}{\mathbf{I}^{0}(X)\cdot K^{\times }}}
\ar[r]_-{a_?} &{?} & \text{\textquotedblleft idelic side\textquotedblright }
}  \label{lX_5}
\end{equation}%
It is now very natural to ask:

\begin{enumerate}
\item Is there also an id\`{e}le type\ object which can be placed in the
lower-right corner?

\item What cohomology groups besides $H^{i}(X,\mathcal{O}_{X}^{\times })$, $%
\limfunc{CH}^{i}(X)$ admit such an \textquotedblleft id\`{e}le
type\textquotedblright\ counterpart?
\end{enumerate}

These questions rest on the idea that there should exist general \textit{id%
\`{e}les with coefficients} which provide flasque resolutions for sheaves,
thus \textit{expressing cohomology groups through subquotients of suitable id%
\`{e}les}.\medskip \newline
We prove:

\begin{theorem}
\label{INTRO_MAINTHM}(Main Theorem) Let $X/k$ be a smooth integral surface
over an arbitrary field $k$, $M_{\ast }$ a big cycle module (see \S \ref%
{section_CycleModules} for the definition) and $\mathcal{M}_{\ast }$ the
associated Zariski sheaf. Then id\`{e}le sheaves with coefficients in $%
M_{\ast }$ (defined in \S \ref{section_IdelesNoProduct}) give a flasque
resolution%
\begin{equation*}
0\rightarrow \mathcal{M}_{\ast }\rightarrow \mathbf{I}_{\mathcal{M}%
}^{0}\longrightarrow \mathbf{I}_{\mathcal{M}}^{1}\longrightarrow \mathbf{I}_{%
\mathcal{M}}^{2}\longrightarrow 0\text{.}
\end{equation*}
\end{theorem}

As a tool for the proof of the main theorem, we need the following version
of the Gersten conjecture for complete local rings:

\begin{theorem}
Suppose $X/k$ is an equicharacteristic complete regular local scheme and let 
$M_{\ast }$ be a big cycle module (see \S \ref{section_CycleModules} for the
definition). Then the Rost cycle complex $C^{\bullet }\left( X,M_{\ast
}\right) $ is exact.
\end{theorem}

See \S \ref{section_AppendixCompleteGersten} for this result, which can be
read independently of the rest of the text. While one would certainly expect
its truth, we have been unable to find a literature reference.\medskip 
\newline
\textit{An application:\medskip }

Once established, this mechanism bridges freely between the cohomological
and the id\`{e}le perspective. For example, suppose $X/\mathbf{F}_{q}$ is a
smooth proper geometrically integral surface, now over a finite field.
Classical work of Kato and Saito \cite{MR717824} shows that there is an
unramified class field theory, a morphism%
\begin{equation}
\limfunc{rec}\nolimits_{X}:\limfunc{CH}\nolimits_{0}(X)\hookrightarrow \pi
_{1}^{\text{\'{e}t}}(X,\overline{x})^{\limfunc{ab}}\text{,}  \label{lX_100}
\end{equation}%
where `$\limfunc{ab}$' denotes abelianization. It is injective with dense
image. It sends a closed point to its Frobenius, just like in classical
class field theory for curves. Since $\limfunc{CH}\nolimits_{0}(X)$ appears
as a cohomology group of the $K$-theory sheaf $\mathcal{K}_{2}$, the above
mechanism automatically produces an id\`{e}le counterpart, which can be
phrased as%
\begin{equation}
\limfunc{rec}\nolimits_{X,\text{id\`{e}lic}}:\frac{\prod_{x,y_{i}^{\prime
}}^{\prime \prime \prime }K_{2}(\widehat{K}_{x,y_{i}^{\prime }})}{%
\prod_{x}K_{2}(\widehat{K}_{x})+\prod_{y}K_{2}(\widehat{K}%
_{y})+\prod_{x,y_{i}^{\prime }}K_{2}(\widehat{\mathcal{O}}_{x,y_{i}^{\prime
}})}\rightarrow \pi _{1}^{\text{\'{e}t}}(X,\overline{x})^{\limfunc{ab}}\text{%
.}  \label{lX_101}
\end{equation}%
Here $K_{n}$ denotes either ordinary $K$-theory groups or Fesenko's
topological Milnor $K$-groups; this does not affect the quotient. The key
point is that the local fields $\mathbf{Q}_{p}$, $\kappa ((t))$ constituting
the classical id\`{e}les are being replaced by $2$-local fields $\widehat{K}%
_{x,y_{i}^{\prime }}\approx \kappa ((s))((t))$ depending on flags of points $%
x$ lying on curves $y$.\medskip

We do \textit{not} intend to reprove class field theoretic results. The
point of the paper is that the equivalence of the two seemingly very
different formulations (eq. \ref{lX_100} versus eq. \ref{lX_101}) follows
immediately from an id\`{e}le resolution of the $\mathcal{K}_{2}$-sheaf. In
the same way one can complete Fig. \ref{lX_5}. See \cite{MR565095}, \cite%
{MR1105583}, \cite{MR1138291}, \cite{MR1478630}, \cite{MR2354210}, \cite%
{MR2489487} for other ad\`{e}le/id\`{e}le theories resolving particular
types of sheaves under varying assumptions.\medskip

\textit{History:} Chevalley initiated the theory of id\`{e}les in 1936 for
class field theory. The theory then was developed further by Weil, Tate and
Iwasawa; finiteness theorems and $L$-functions entered the picture. Parshin
was the first to formulate a theory of ad\`{e}les for surfaces \cite%
{MR0419458}, but already had a far broader range of applications in mind
than just class field theory (mostly unpublished \cite{MR697316}, \cite%
{ParshinAdeleTheory}). Beilinson then gave a general theory of ad\`{e}les in
all dimension with quasi-coherent coefficients \cite{MR565095}, with
detailed proofs written down by Huber \cite{MR1105583}, \cite{MR1138291}.
These theories are all `finite type over a field'. Fesenko developed a
theory `over $\mathbf{Z}$', also incorporating infinite places, versions of
harmonic analysis and $L$-functions \cite{MR2046602}, \cite{MR2462437}, \cite%
{MR2658047}. Osipov gave a theory for $K$-theory coefficients \cite%
{MR1478630}. Gorchinskiy developed a theory of `not adically completed id%
\`{e}les' with very general coefficients and in all dimensions \cite%
{MR2354210}, \cite{MR2489487}. A similar
`adically uncompleted' theory for quasi-coherent coefficients is due to
Huber, again \cite{MR1138291}.

\section{\label{section_CycleModules}Cycle modules}

\subsection{Axioms for big range of definition}

We will develop a flasque resolution in terms of \textquotedblleft id\`{e}le
sheaves\textquotedblright\ for sheaves which come from Rost cycle modules
(introduced in \cite{MR1418952}), generalizing the mechanism explained in \S %
\ref{section_Motivation} for curves. Actually, we \textit{need} to work with
a broader range of definition than in the original paper, so it is worth
stating the axioms we use in their precise form:\newline
Fix a base field $k$. For every field extension $F/k$ let $K_{\ast
}^{M}\left( F\right) $ denote the $\mathbf{Z}_{\geq 0}$-graded
anticommutative Milnor $K$-theory ring over $F$,%
\begin{equation*}
K_{\ast }^{M}\left( F\right) =\tcoprod\nolimits_{n\geq 0}K_{n}^{M}\left(
F\right) \qquad \qquad =\mathbf{Z}\oplus F^{\times }\oplus K_{2}^{M}\left(
F\right) \oplus \ldots \text{.}
\end{equation*}%
Then a \emph{big cycle module} is an object function%
\begin{equation}
M_{\ast }:\{\text{field extensions }F/k\}\longrightarrow \{\mathbf{Z}\text{%
-graded abelian groups}\}  \label{lX_6}
\end{equation}%
along with some extra structure \textbf{D1}-\textbf{D4} satisfying various
axioms. The extra structure (called \emph{transfers}) is given as follows:
We shall tacitly assume that all our fields are extensions of $k$.

\begin{description}
\item[D1] For every field extension $F\hookrightarrow E$ there is an abelian
group morphism of relative degree $0$, $\limfunc{res}\nolimits_{F}^{E}:M_{%
\ast }\left( F\right) \rightarrow M_{\ast }\left( E\right) $, called \emph{%
restriction}.

\item[D2] For every \textit{finite} field extension $F\hookrightarrow E$
there is an abelian group morphism of relative degree $0$, $\limfunc{cor}%
\nolimits_{F}^{E}:M_{\ast }\left( E\right) \rightarrow M_{\ast }\left(
F\right) $, called \emph{corestriction} (or \emph{norm}).

\item[D3] For every field $F$ the group $M_{\ast }\left( F\right) $ has the
structure of a graded left-$K_{\ast }^{M}\left( F\right) $-module,%
\begin{equation*}
K_{n}^{M}\left( F\right) \cdot M_{m}\left( F\right) \longrightarrow
M_{n+m}\left( F\right) \text{.}
\end{equation*}

\item[D4] For every field $F$ and every discrete excellent $k$-trivial
valuation $v$ on $F$ (see Rmk. \ref{marker_valuations} for terminology),
there is an abelian group homomorphism of relative degree $-1$,%
\begin{equation*}
\partial _{v}^{F}:M_{\ast }\left( F\right) \longrightarrow M_{\ast -1}\left(
\kappa \left( v\right) \right) \text{,}
\end{equation*}%
where $\kappa \left( v\right) $ denotes the residue field of the valuation
ring $A_{v}\subseteq F$ corresponding to $v$. Then $\partial _{v}^{F}$ is
called the \emph{boundary map} at $v$.
\end{description}

\begin{remark}
\label{marker_valuations}(Valuations) \emph{Discrete} means that the value
group is isomorphic to $\mathbf{Z}$. This implies that the ring of integers
(= valuation ring) is Noetherian. \emph{Excellent} means that we demand the
ring of integers to be excellent. Finally, $k$\emph{-trivial} means that the
restriction of the valuation to $k$ is trivial. Without this condition the
residue field $\kappa (v)$ would not need to be an extension of $k$. Also,
this excludes for example $p$-adic valuations.
\end{remark}

\begin{remark}
(Changes in the definition) This differs from the original definition in 
\cite{MR1418952} as follows: Rost demands $M_{\ast }$ in eq. \ref{lX_6} only
to be defined and have transfers on \emph{finitely generated} field
extensions of $k$. However, this excludes many interesting maps, e.g.%
\begin{equation*}
\limfunc{res}\nolimits_{k(t)}^{k((t))}:M_{\ast }(k(t))\rightarrow M_{\ast
}(k((t)))\qquad \limfunc{res}\nolimits_{\mathbf{Q}}^{\mathbf{Q}_{p}}:M_{\ast
}(\mathbf{Q})\rightarrow M_{\ast }(\mathbf{Q}_{p})
\end{equation*}%
coming from completions. Such maps abound in the construction of id\`{e}les.
They play a key role in arithmetic geometry, for example in the definition
of Selmer and Tate-Shafarevich groups or the Brauer-Manin obstruction. Next,
in \textbf{D4} Rost restricts to valuations such that the transcendence
degree over $k$ from $F$ to $\kappa (v)$ drops by one (\textquotedblleft
geometric valuations\textquotedblright ). Again, this is too restrictive for
our purposes, e.g. the $t$-valuation of $k((t))$ has residue field $k$ and
thus is \emph{not} geometric. The relaxed condition adds many valuations,
even to finitely generated fields. For example, suppose $k=\mathbf{Q}$ and
take $F:=\mathbf{Q}(s,t)$. Restrict the $s$-valuation coming from $%
F\hookrightarrow \mathbf{Q}((s))$, $t\mapsto se^{s}$, to $F$. It has ring of
integers and residue field%
\begin{equation*}
A_{v}:=\mathbf{Q}(s,se^{s})\cap \mathbf{Q}[[s]]\qquad \qquad \kappa (v)=%
\mathbf{Q}\text{,}
\end{equation*}%
so a drop of transcendence degree by two. The ring $A_{v}$ is excellent \cite%
[Thm. 1, Folgerung 3 (i)]{MR0427319}. This valuation would not be allowed in
Rost's original theory.\footnote{%
Rost already points out in his paper that the present relaxed axioms are
feasible, see for example \cite[Rmk. 1.8, Rmk. 2.8]{MR1418952}.}
\end{remark}

The transfer morphisms from \textbf{D1}-\textbf{D4} need to satisfy a long
list of axioms, \textbf{R1a}-\textbf{R3e }(\textquotedblleft cycle premodule
axioms\textquotedblright ) and the two crucial axioms (\textquotedblleft
cycle module axioms\textquotedblright\ \textbf{FD}, \textbf{C}); they can be
taken over literally from \cite{MR1418952} (except that field extensions
need not be finitely generated).

\begin{example}
Milnor $K$-theory, Quillen $K$-theory and Galois cohomology groups with ($%
\neq \limfunc{char}k$) torsion coefficients (in the sense of \cite[Rmk. 1.11]%
{MR1418952}) are big cycle modules. See \cite[Thm. 1.4 and following
paragraph]{MR1418952}. In principle, one can formulate categories of cycle
modules%
\begin{equation*}
CM_{\limfunc{big}}\rightleftarrows CM
\end{equation*}%
with $CM$ the theory as in Rost's work, and $CM_{\limfunc{big}}$ with the
relaxed assumptions here. There is an obvious restriction functor to the
right and one can use the left-adjoint to \textquotedblleft
prolong\textquotedblright\ classical cycle modules to the present setup. I
thank F. D\'{e}glise and S. Kelly for explaining this to me.
\end{example}

We shall now repeat some particularly important constructions from \cite%
{MR1418952}, even though no changes are neccessary, but we will often refer
to them:\medskip \newline
Let $X$ be an excellent scheme. We shall write $X^{(i)}$ for the set of
codimension $i$ points. Suppose $X\rightarrow \limfunc{Spec}k$, $x\in
X^{\left( i\right) }$ for some $i$, $y\in X^{\left( i+1\right) }$, $y\in 
\overline{\{x\}}$, i.e. $y$ can be viewed as a codimension one point in the
integral closed subscheme $\overline{\{x\}}\hookrightarrow X$. As in \cite%
{MR1418952} define a \emph{boundary map}%
\begin{equation*}
\partial _{y}^{x}:M_{\ast }\left( \kappa \left( x\right) \right)
\longrightarrow M_{\ast -1}\left( \kappa \left( y\right) \right)
\end{equation*}%
as follows: The stalk $\mathcal{O}_{\overline{\{x\}},y}$ is a
one-dimensional local domain. By excellence, the normalization%
\begin{equation}
\varphi :\limfunc{Spec}\mathcal{O}_{\overline{\{x\}},y}^{\prime
}\longrightarrow \limfunc{Spec}\mathcal{O}_{\overline{\{x\}},y}  \label{l317}
\end{equation}%
inside its own field of fractions $\kappa \left( x\right) $ is a finite
morphism, so the closed point $y$ in $\limfunc{Spec}\mathcal{O}_{\overline{%
\{x\}},y}$ has finite preimage $\{y_{1}^{\prime },\ldots ,y_{r}^{\prime
}\}\subset \limfunc{Spec}\mathcal{O}_{\overline{\{x\}},y}^{\prime }$. By
normality the localizations $(\mathcal{O}_{\overline{\{x\}},y}^{\prime
})_{y_{i}^{\prime }}$ are DVRs. Let $v_{1},\ldots ,v_{r}$ denote the
corresponding discrete valuations of the fraction field $\kappa \left(
x\right) $. Now one defines%
\begin{equation}
\partial _{y}^{x}:=\tsum\nolimits_{i=1}^{r}\limfunc{cor}\nolimits_{\kappa
\left( y\right) }^{\kappa \left( v_{i}\right) }\circ \partial
_{v_{i}}^{\kappa \left( x\right) }:M_{\ast }\left( \kappa \left( x\right)
\right) \longrightarrow M_{\ast -1}\left( \kappa \left( y\right) \right) 
\text{,}  \label{l318}
\end{equation}%
where $\kappa \left( v_{i}\right) $ denotes the residue field with respect
to the valuation $v_{i}$ (i.e. the residue field $\kappa \left(
y_{i}^{\prime }\right) $ of the ring $\mathcal{O}_{\overline{\{x\}}%
,y}^{\prime }$), $\partial _{v_{i}}^{\kappa \left( x\right) }$ is as in 
\textbf{D4}. Note that the finiteness of the normalization morphism $\varphi 
$, eq. \ref{l317}, implies that $\kappa \left( v_{i}\right) /\kappa \left(
y\right) $ is a \textit{finite} field extension, so \textbf{D2} is indeed
available.\medskip \newline
For every excellent $n$-equidimensional scheme $X\rightarrow \limfunc{Spec}k$
we define%
\begin{equation}
C^{p}\left( X,M_{q}\right) :=\coprod\nolimits_{x\in X^{\left( p\right)
}}M_{q-p}\left( \kappa \left( x\right) \right) \text{.\qquad (also denoted
by }C^{p}(X;M,q)\text{)}  \label{l446}
\end{equation}%
Here $q$ is a $\mathbf{Z}$-grading and along $p\geq 0$ one obtains a complex 
$C^{\bullet }(X,M_{q})$ by defining the $q$-degree-preserving differential%
\begin{equation*}
\partial _{X}:C^{p}\left( X,M_{q}\right) \longrightarrow C^{p+1}\left(
X,M_{q}\right)
\end{equation*}%
by using for all pairs $x\in X^{\left( p\right) }$ and $y\in X^{\left(
p+1\right) }$, $y\in \overline{\{x\}}$ (i.e. $y$ is a codimension one point
of the integral closed subscheme $\overline{\{x\}}$), the boundary map%
\begin{equation}
\partial _{y}^{x}:M_{\ast }\left( \kappa \left( x\right) \right)
\longrightarrow M_{\ast -1}\left( \kappa \left( y\right) \right)
\label{l334}
\end{equation}%
of eq. \ref{l318} and defining $\partial _{X}:=\sum \partial _{y}^{x}$ for
all such pairs $y\in \overline{\{x\}}$. This is a differential, $\partial
_{X}^{2}=0$, see \cite[Lemma 3.3]{MR1418952}. In fact this property is
essentially equivalent to axiom \textbf{C} of a cycle module. One defines%
\begin{equation}
M_{\ast }\left( X\right) :=\ker \left( \partial _{X}:C^{0}\left( X,M_{\ast
}\right) \rightarrow C^{1}\left( X,M_{\ast }\right) \right) \text{.}
\label{l322_DefOfUnramCycles}
\end{equation}%
Hence, $M_{\ast }\left( X\right) $ is a subgroup of $C^{0}\left( X,M_{\ast
}\right) $ and its elements are called \emph{unramified} cycles, see \cite[%
p. 338]{MR1418952}. Letting $U\mapsto M_{\ast }(U)$ for Zariski opens $U$,
we have a sheaf $\mathcal{M}_{\ast }\left( U\right) :=M_{\ast }\left(
U\right) $. Moreover, we call%
\begin{equation}
C^{\bullet }\left( X,M_{\ast }\right) :0\longrightarrow M_{\ast }\left(
X\right) \longrightarrow C^{0}\left( X,M_{\ast }\right) \overset{\partial
_{X}}{\longrightarrow }C^{1}\left( X,M_{\ast }\right) \overset{\partial _{X}}%
{\longrightarrow }\cdots  \label{l404}
\end{equation}%
the \emph{Rost cycle complex} of $X$ \emph{with coefficients in} $M_{\ast }$%
. Unlike \cite{MR1418952} we prefer to index by codimension rather than
dimension.

\subsection{\label{Subsect_FlatPullbackOfNonconstRelDim}Pullback \&\
Pushforward}

Suppose $X,Y$ are excellent schemes, say they are $n_{X}$-, $n_{Y}$-equidimensional, $%
N:=n_{Y}-n_{X}$. If $X,Y$ are finite type over a field and $f:X\rightarrow Y$
is proper, Rost constructs a pushforward%
\begin{equation*}
f_{\ast }:C^{\bullet }\left( X,M_{\ast }\right) \rightarrow C^{\bullet
+N}(Y,M_{\ast })\text{,}
\end{equation*}%
see \cite[\S 3.4]{MR1418952}. This construction carries over to the case of $%
X,Y$ just equidimensional excellent and $f$ a \textit{finite} morphism, see
Lemma \ref{Lemma_BasicFunctorialityPropsForCycleModules} below.\newline
If $X,Y$ are finite type over a field and $f:X\rightarrow Y$ is flat, Rost
constructs a pullback operation. To prolong this to excellent schemes, we
say that a \emph{flat morphism of constant relative dimension }$r$ is a flat
morphism $f:X\rightarrow Y$ such that there is some $r\in \mathbf{Z}$ such
that for all $y\in \limfunc{im}f$ and all generic points $x\in f^{-1}(%
\overline{\{y\}})^{\left( 0\right) }$ one has

\begin{enumerate}
\item $\dim_{X}x=\dim_{Y}y+r$.

\item or equivalently $\dim \mathcal{O}_{X,x}\otimes _{\mathcal{O}%
_{Y,y}}\kappa \left( y\right) =-N-r\geq 0$.
\end{enumerate}

See \cite[Ch. IX]{MR2427530} for this approach. Then there is a flat pullback%
\begin{equation*}
f^{\ast }:C^{\bullet }\left( Y,M_{\ast }\right) \rightarrow C^{\bullet
-r-N}(X,M_{\ast })\text{,}
\end{equation*}%
see \cite[\S 3.5]{MR1418952} (Rost also develops a more general pullback
operation, for which we have no use).

\begin{lemma}
\label{Lemma_BasicFunctorialityPropsForCycleModules}Suppose

\begin{itemize}
\item $X,Y,Z$ are $n_{X}$-, $n_{Y}$-, $n_{Z}$-equidimensional excellent
schemes, $N:=n_{Y}-n_{X}$,

\item $f:X\rightarrow Y$ finite,

\item $g:Z\rightarrow Y$ is flat of constant relative dimension $r$,

\item $W:=Z\times _{Y}X$,

\item $f^{\prime },g^{\prime }$ are the morphisms induced by base change.
\end{itemize}

Then the diagrams%
\begin{align*}
& \begin{xy} \xymatrix@!0@C=30mm@R=15mm{ C^{p}(X,M_{\ast})
\ar[r]^{\partial_{X}} \ar[d]_{f_{\ast}} & C^{p+1}(X,M_{\ast })
\ar[d]^{f_{\ast}} & C^{p}(X,M_{\ast}) \ar[r]^{g^{\prime\ast}}
\ar[d]_{f_{\ast}} & C^{p-r-N}(W,M_{\ast}) \ar[d]^{f^{\prime}_{\ast}} \\
C^{p+N}(Y,M_{\ast}) \ar[r]_{\partial_{Y}} & C^{p+N+1}(Y,M_{\ast}) &
C^{p+N}(Y,M_{\ast}) \ar[r]_{g^{\ast}} & C^{p-r}(Z,M_{\ast}) \\ } \end{xy} \\
& \begin{xy} \xymatrix@!0@C=40mm@R=15mm{ C^{p}(Y,M_{\ast})
\ar[r]^{\partial_{Y}} \ar[d]_{g^{\ast}} & C^{p+1}(Y,M_{\ast })
\ar[d]^{g^{\ast}} \\ C^{p-r-N}(Z,M_{\ast}) \ar[r]_{\partial_{Z}} &
C^{p-r-N+1}(Z,M_{\ast}) \\ } \end{xy}
\end{align*}%
commute. Moreover, $(f_{1}\circ f_{2})_{\ast }=f_{1\ast }\circ f_{2\ast }$
and $\left( g_{1}\circ g_{2}\right) ^{\ast }=g_{2}^{\ast }\circ g_{1}^{\ast
} $ for $f_{1},f_{2}$ two finite morphisms $X\overset{f_{2}}{\rightarrow }Y%
\overset{f_{1}}{\rightarrow }Y^{\prime }$, $g_{1},g_{2}$ two flat constant
relative dimension morphisms $Z\overset{g_{2}}{\rightarrow }Y\overset{g_{1}}{%
\rightarrow }Y^{\prime }$, $r\left( g_{1}\circ g_{2}\right)
=r(g_{1})+r(g_{2})$ for the constant relative dimensions respectively.
\end{lemma}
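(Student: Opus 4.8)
The statement to prove is Lemma~\ref{Lemma_BasicFunctorialityPropsForCycleModules}: commutativity of three diagrams (pushforward vs.\ $\partial$; pushforward vs.\ flat pullback via base change; flat pullback vs.\ $\partial$), plus functoriality of $f_\ast$ and $g^\ast$ under composition. The essential point is that every assertion is already proved by Rost in \cite[\S3.4,\S3.5, Prop.~4.1, Prop.~4.6]{MR98a:14006} under the hypothesis ``finite type over a field,'' and here we only need to relax this to ``equidimensional excellent,'' keeping morphisms finite (for $f_\ast$) or flat of constant relative dimension (for $g^\ast$). So the plan is \emph{not} to reprove Rost's theorems from scratch, but to check that the definitions and the proofs localize: they are assembled out of the atomic transfer maps $\operatorname{res}$, $\operatorname{cor}$, $\partial_v$ and their interactions via axioms \textbf{R1a}--\textbf{R3f} and \textbf{FD},\textbf{C}, none of which cares whether the ambient scheme is of finite type over $k$. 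Concretely: $f_\ast$ on $C^\bullet$ is defined point-by-point as a sum of corestrictions $\operatorname{cor}^{\kappa(x)}_{\kappa(f(x))}$ over points $x$ with $[\kappa(x):\kappa(f(x))]<\infty$, which is exactly the situation guaranteed by $f$ finite; $f^\ast$ is defined via the multiplicities $[\mathcal{O}_X,f]^x_y$ and restrictions, and the only finiteness needed is that the fibers $X_y$ have finitely many generic points, which holds for $f$ flat of constant relative dimension since $X$ is Noetherian.

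\textbf{Step 1: reduce to the local/affine situation.} First I would observe that all four maps in play ($\partial_X$, $f_\ast$, $g^\ast$, $g'^\ast$, $f'_\ast$) are defined componentwise on the direct sums $C^p(-,M_\ast)=\coprod_{x}M_{\ast-p}(\kappa(x))$, indexed by points. Hence each diagram commutes if and only if, for every relevant pair (or triple) of points, a finite diagram of the atomic transfer maps commutes. This reduces everything to statements about $\operatorname{Spec}$ of local rings obtained by localizing $X,Y,Z,W$ at the points in question, together with their normalizations — exactly the rings entering the definition of $\partial^x_y$ in eq.~\eqref{l318}.

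\textbf{Step 2: verify the needed inputs survive excellence.} The three geometric facts Rost's proofs use are: (a) normalizations of the one-dimensional local domains $\mathcal{O}_{\overline{\{x\}},y}$ are \emph{finite} morphisms — this is precisely \emph{excellence}, already invoked around eq.~\eqref{l317}; (b) completions of universally catenary Noetherian local domains are equidimensional and flat of relative dimension $0$ — this is Lemma~\ref{Lemma_CompletionIsFlatOfRelDimZero} (and excellent rings are universally catenary); (c) base change $W=Z\times_Y X$ of a finite morphism by a flat morphism of constant relative dimension stays finite, and equidimensionality/excellence are preserved — finiteness is stable under base change, excellence is stable under finite type and localization, and the constant-relative-dimension bookkeeping is the dimension formula $\dim\mathcal{O}_{X,x}\otimes_{\mathcal{O}_{Y,y}}\kappa(y)=(n_X-n_Y)-r$, which is additive in towers by the catenary property. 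I would state (c) as a short sublemma if it is not immediate.

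\textbf{Step 3: invoke Rost's identities pointwise, then the composition rules.} With Steps 1--2, each square is, point-by-point, an instance of a relation among $\operatorname{res},\operatorname{cor},\partial_v$ that is an axiom (\textbf{R1a}--\textbf{R3f}) or a formal consequence thereof — precisely what Rost checks in \cite[\S3,\S4]{MR98a:14006}; the only modification is to feed his arguments the excellent-local rings from Step~1 rather than finite-type ones, which is legitimate since his arguments never use finite type beyond finiteness of normalization. The functoriality $(f_1f_2)_\ast=f_{1\ast}f_{2\ast}$ follows from transitivity of corestriction (\textbf{R1b}), $(g_1g_2)^\ast=g_2^\ast g_1^\ast$ from transitivity of restriction (\textbf{R1a}) together with multiplicativity of the intersection multiplicities $[\mathcal{O}_X,f]^x_y$ in towers, and $r(g_1\circ g_2)=r(g_1)+r(g_2)$ is immediate from the defining dimension formula. \textbf{The main obstacle} I anticipate is purely bookkeeping rather than conceptual: tracking that the pullback $g^\ast$ of \emph{non-constant} relative dimension would break, so one must confirm at each use that the constant-relative-dimension hypothesis is genuinely stable under the base changes appearing (Step~2(c)) and that the multiplicities $[\mathcal{O}_X,f]^x_y$ are the ``correct'' ones after completion — i.e.\ that taking a completion does not alter them, which again rests on flatness of completion (Lemma~\ref{Lemma_CompletionIsFlatOfRelDimZero}) and the projection-formula-type compatibility in \cite[\S3.5, Prop.~4.6(a)]{MR98a:14006}.
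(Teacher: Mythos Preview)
Your proposal is correct and takes essentially the same approach as the paper: both defer to Rost \cite[Prop.~4.1, 4.6]{MR98a:14006} and assert that his arguments carry over verbatim once one replaces ``finite type over $k$'' by ``equidimensional excellent'' and ``proper'' by ``finite.'' The paper's proof is a single sentence to this effect, whereas you spell out in detail \emph{why} the arguments transfer (pointwise reduction, excellence giving finite normalization, stability under base change); your Step~3 remark about completions and Lemma~\ref{Lemma_CompletionIsFlatOfRelDimZero} is extraneous here since the lemma itself involves no completions, but this does no harm.
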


See \cite[Prop. 4.1, 4.6]{MR1418952}, the proofs carry over verbatim (but
note that our $f$ is finite as opposed to just being proper). See also \cite[%
Ch. IX]{MR2427530} for the case $M_{\ast }=K_{\ast }^{M}$. We will use the
above facts without further quoting this lemma.

\section{\label{section_IdelesWithQCCoeffs}Ad\`{e}les with quasi-coherent
coefficients}

Firstly, let us explain how to find the correct notion of multidimensional ad%
\`{e}les. The classical ad\`{e}les are built from local fields, which
usually can be written as an ind-pro limit, for example%
\begin{equation}
\mathbf{Q}_{p}=\underrightarrow{\limfunc{colim}}_{j}\underleftarrow{\lim }%
_{i}p^{-j}\mathbf{Z}/p^{i}\mathbf{Z}\qquad \kappa ((t))=\underrightarrow{%
\limfunc{colim}}_{j}\underleftarrow{\lim }_{i}t^{-j}k[t]/t^{i}\text{,}
\label{lX_7}
\end{equation}%
running over the system $i,j\rightarrow +\infty $. In the number field case,
the infinite places $\mathbf{R}$, $\mathbf{C}$ form an exception to this
principle; however, such places do not occur in the geometric id\`{e}le
theory of this text. One may now attempt to `globalize' such an ind-pro
perspective to a whole scheme. This leads to Beilinson's theory of ad\`{e}%
les for quasi-coherent sheaves \cite{MR565095}. We quickly summarize the key
points:\newline
Let $X$ be a Noetherian scheme. For scheme points $\eta _{0},\eta _{1}\in X$
write $\eta _{0}\geq \eta _{1}$ if $\overline{\{\eta _{0}\}}\ni \eta _{1}$
(\textquotedblleft specialization\textquotedblright ). Denote by $S\left(
X\right) _{n}:=\{(\eta _{0}\geq \cdots \geq \eta _{n}),\eta _{i}\in X\}$ the
set of chains of points of length $n+1$. Let $K_{n}\subseteq S\left(
X\right) _{n}$ be an arbitrary subset. For any point $\eta _{0}\in X$ define 
$\left. _{\eta _{0}}K\right. :=\{(\eta _{1}\geq \cdots \geq \eta _{n})$ s.t. 
$(\eta _{0}\geq \cdots \geq \eta _{n})\in K_{n}\}$, a subset of $S\left(
X\right) _{n-1}$. This can be viewed as the set of `subscheme flags' below $%
\eta $. Let $\mathcal{F}$ be a \textit{coherent} sheaf on $X$. For $n=0$ and 
$n\geq 1$ define respectively (and inductively)%
\begin{eqnarray}
A(K_{0},\mathcal{F}):= &&\prod\nolimits_{\eta \in K_{0}}\underleftarrow{\lim 
}_{i}\mathcal{F}\otimes _{\mathcal{O}_{X}}\mathcal{O}_{X,\eta }/\mathfrak{m}%
_{\eta }^{i}  \label{TATEMATRIX_l6} \\
A(K_{n},\mathcal{F}):= &&\prod\nolimits_{\eta \in X}\underleftarrow{\lim }%
_{i}A(\left. _{\eta }K_{n}\right. ,\mathcal{F}\otimes _{\mathcal{O}_{X}}%
\mathcal{O}_{X,\eta }/\mathfrak{m}_{\eta }^{i})\text{.}  \notag
\end{eqnarray}%
The sheaves $\mathcal{F}\otimes _{\mathcal{O}_{X}}\mathcal{O}_{X,\eta }/%
\mathfrak{m}_{\eta }^{i}$ are usually only quasi-coherent, so we complete
this definition as follows: For a \textit{quasi-coherent} sheaf $\mathcal{F}$
we define $A(K_{n},\mathcal{F}):=\underrightarrow{\limfunc{colim}}_{\mathcal{%
F}_{j}}A(K_{n},\mathcal{F}_{j})$, where $\mathcal{F}_{j}$ runs through all
coherent subsheaves of $\mathcal{F}$ (and hereby reducing to eqs. \ref%
{TATEMATRIX_l6}). Built successively from colimits and Mittag-Leffler
limits, $A(K_{n},-)$ is a covariant exact functor from quasi-coherent
sheaves to quasi-coherent sheaves.

\begin{example}
\label{Example_AutomaticFinitenessCondition}(Automatic finiteness condition)
For an integral smooth curve $X/k$ with function field $K$ this recovers the
classical ad\`{e}les for $\mathcal{F}:=\mathcal{O}_{X}$. Namely, for $%
\triangle :=S(X)_{1}$ one computes%
\begin{eqnarray}
A(\triangle ,\mathcal{O}_{X}) &=&A(\left. _{\eta }\triangle \right. ,K)=%
\underset{D}{\underrightarrow{\limfunc{colim}}}A(\left. _{\eta }\triangle
\right. ,\mathcal{O}_{X}(D))  \notag \\
&=&\underset{D}{\underrightarrow{\limfunc{colim}}}\prod\nolimits_{x\in
X^{(1)}}\underset{i}{\underleftarrow{\lim }}\underset{E\not\ni x}{%
\underrightarrow{\limfunc{colim}}}\mathcal{O}_{X}(D+E)/x^{i}  \notag \\
&=&\underset{D}{\underrightarrow{\limfunc{colim}}}\prod\nolimits_{x\in
X^{(1)}}\widehat{\mathcal{O}}_{x}\otimes \mathcal{O}_{x}(D)\qquad \subseteq
\prod\nolimits_{x\in X^{(1)}}\widehat{K}_{x}  \notag \\
&=&\left. \prod\nolimits_{x\in X^{(1)}}^{\prime }\right. \widehat{K}_{x}%
\text{,}  \label{lX_9}
\end{eqnarray}%
where $D$ runs through all divisors so that $\underrightarrow{\limfunc{colim}%
}_{D}\mathcal{O}_{X}(D)=K$ and $E$ through all divisors whose support does
not contain $x$ so that $\underrightarrow{\limfunc{colim}}_{E}\mathcal{O}%
_{X}(E)=\mathcal{O}_{x}$. The prime superscript in eq. \ref{lX_9} means that
for all but finitely many $x$ the component needs to lie in $\widehat{%
\mathcal{O}}_{x}\subseteq \widehat{K}_{x}$. The key point is: \emph{This
classical finiteness condition comes out automatically from Beilinson's
definition.}
\end{example}

Now, $S(X)_{\bullet }$ carries a natural structure of a simplicial set
(omitting the $i$-th entry in a flag yields faces; duplicating the $i$-th
entry in a flag degeneracies). This turns%
\begin{equation*}
\mathbf{A}^{\bullet }(U,\mathcal{F}):=A(S(U)_{\bullet },\mathcal{F})\qquad 
\text{(for }U\text{ Zariski open)}
\end{equation*}%
into a sheaf of cosimplicial abelian groups. We prefer to read it as a
complex of sheaves (via the unreduced Dold-Kan correspondence) with its
terms called $\mathbf{A}_{\mathcal{F}}^{i}$.

\begin{theorem}
\label{lX_BeilinsonResolutionThm}(Beilinson \cite[\S 2]{MR565095}) For a
Noetherian scheme $X$ and a quasi-coherent sheaf $\mathcal{F}$ on $X$, the $%
\mathbf{A}^{i}(-,\mathcal{F})$ are flasque sheaves and%
\begin{equation*}
0\longrightarrow \mathcal{F}\longrightarrow \mathbf{A}_{\mathcal{F}%
}^{0}\longrightarrow \mathbf{A}_{\mathcal{F}}^{1}\longrightarrow \cdots
\end{equation*}%
is a flasque resolution.
\end{theorem}

See Huber \cite{MR1105583}, \cite{MR1138291}, for a detailed proof. The
purpose of this paper is to find an analogous theorem for the sheaves which
come from Rost cycle modules. To get an idea of the problem, let us
experiment with the $K$-theory sheaf $\mathcal{K}_{n}$. Then already for a
single local field as in eq. \ref{lX_7} there are two natural candidates for
the $\mathbf{Q}_{p}$ example:%
\begin{equation*}
K_{n}(\underrightarrow{\limfunc{colim}}_{j}\underleftarrow{\lim }_{i}p^{-j}%
\mathbf{Z}/p^{i}\mathbf{Z})\qquad \text{or}\qquad \underrightarrow{\limfunc{%
colim}}_{j}\underleftarrow{\lim }_{i}K_{n}(p^{-j}\mathbf{Z}/p^{i}\mathbf{Z})%
\text{.}
\end{equation*}%
We immediately notice that \textit{the right-hand side version does not even
make sense}. While the colimit over the system $p^{-j}\mathbf{Z}_{p}$ has a
ring structure and equals $\mathbf{Q}_{p}$, the individual terms are just $%
\mathbf{Z}_{p}$-modules, but not rings. Although this poses no problem for
Beilinson's ad\`{e}les of quasi-coherent sheaves, it deprives us from
interpreting the terms as schemes, and thus to speak of the $K$-theory of
them. Thus, only the left-hand side version remains as a feasible
definition. However, its global analogue is $K_{n}(\mathbf{A}_{\mathcal{O}%
_{X}}^{i}(X))$ and $\limfunc{Spec}\mathbf{A}_{\mathcal{O}_{X}}^{i}(X)$ is a
terrible space and bears no resemblance to classical id\`{e}le-type
constructions. For example, the classical Brauer-Hasse-Noether sequence%
\begin{equation*}
0\longrightarrow \limfunc{Br}K\longrightarrow \tcoprod\nolimits_{v}\limfunc{%
Br}\widehat{K}_{v}\longrightarrow \mathbf{Q}/\mathbf{Z}\longrightarrow 0
\end{equation*}%
for a number field $K$ is suggestive of the fact that invariants (like the
Brauer group) should be taken for local fields individually, rather than of
the whole ad\`{e}les; \textquotedblleft $\limfunc{Br}(\tcoprod\nolimits_{v}%
\widehat{K}_{v})$\textquotedblright\ is not reasonable.

\begin{remark}
M. Morrow has recently demonstrated that the groups $\underleftarrow{\lim }%
_{i}K_{n}(\mathbf{Z}/p^{i}\mathbf{Z})$ play an interesting role in a
corrected Gersten conjecture for singular schemes. Although not in this
paper, this suggests that they might play a role in future id\`{e}le
theories \cite{MMorrow2}, \cite{MMorrow1}.
\end{remark}

In order to resolve the problem, we need to understand how (higher) local
fields occur in the Beilinson ad\`{e}les:

\begin{definition}
\label{lB_MarkerDefHigherLocalField}(Parshin \cite{MR514485}) For $n\geq 1$
an $n$-\emph{local field} $F$ \emph{with last residue field} $k$ is a
complete discrete valuation field such that its residue field is an $(n-1)$%
-local field with last residue field $k$. We agree on the convention that $k$
is the only $0$-local field with last residue field $k$.
\end{definition}

The datum of a $2$-local field can be depicted graphically as 
\begin{equation}
\begin{array}{cccccl}
F_{2} &  &  &  &  & \text{(level }2\text{ field)} \\ 
\uparrow &  &  &  &  &  \\ 
A_{2} & \longrightarrow & F_{1} &  &  & \text{(level }1\text{ field)} \\ 
&  & \uparrow &  &  &  \\ 
&  & A_{1} & \longrightarrow & F_{0}\text{,} & \text{(last residue field)}%
\end{array}
\label{lBT_5}
\end{equation}%
with the arrows $A_{i}\rightarrow F_{i}$ being the inclusions of the rings
of integers $A_{i}$ into their fields of fractions, $A_{i}\rightarrow
F_{i-1} $ the quotient maps.

\begin{theorem}
\label{TATE_StructureOfLocalAdelesProp}(Beilinson \cite{MR565095}) Suppose $%
X $ is a $n$-dimensional reduced scheme of finite type over a field $k$ and $%
\triangle =\{(\eta _{0}\geq \cdots \geq \eta _{n})\}$ a singleton set such
that $\limfunc{codim}_{X}\eta _{i}=i$. Then $A(\triangle ,\mathcal{O}_{X})$
is a finite direct product of $n$-local fields $\prod K_{j}$.
\end{theorem}

See \cite[p. $2$, second paragraph]{MR565095} or again Huber \cite{MR1105583}%
, \cite{MR1138291}. Dropping the assumption $\limfunc{codim}_{X}\eta _{i}=i$
one finds similar structures, e.g. fields of fractions of complete local
rings whose residue fields are again of this type $-$ or eventually a finite
extension of $k$. We will not go into details.

\begin{lemma}
If $F$ is an $n$-local field with last residue field $k$ and $\limfunc{char}%
F=\limfunc{char}k$, then there is a non-canonical field isomorphism%
\begin{equation*}
F\simeq k^{\prime }((t_{1}))\cdots ((t_{n}))
\end{equation*}%
for $k^{\prime }/k$ a finite field extension.
\end{lemma}

This follows from a repeated use of Cohen's Structure Theorem in the
equicharacteristic case. We will not use this lemma, but it might be
instructive to see that the $A(\triangle ,\mathcal{O}_{X})$ for singletons $%
\triangle $ have a very uniform structure. See \cite{MR1804915} for more
background.\medskip \newline
\textit{Now we focus on the case of }$X/k$\textit{\ an integral smooth
surface}. In order to proceed we need to make $A(\triangle ,\mathcal{O}_{X})$
explicit for $\triangle $ any singleton set.

\begin{lemma}
\label{computelocalcomps}Write $\eta \in X^{(0)}$ for the generic point and $%
y\in X^{(1)}$ and $x\in X^{(2)}$ with $x\in \overline{\{y\}}$ are arbitrary.
One computes the following:

\begin{enumerate}
\item $A(\{x\},\mathcal{O}_{X})=\widehat{\mathcal{O}}_{x}$ is the completion
of the local ring $\mathcal{O}_{x}$.\newline
This is a $2$-dimensional complete regular local domain.

\item $A(\{y\},\mathcal{O}_{X})=\widehat{\mathcal{O}}_{y}$ is the completion
of the local ring $\mathcal{O}_{y}$.\newline
This is a $1$-dimensional complete regular local domain. Hence, it is a
complete DVR with residue field $\kappa \left( y\right) $.

\item $A(\{y\geq x\},\mathcal{O}_{X})=\prod \widehat{\mathcal{O}}%
_{x,y_{i}^{\prime }}$ , where

\begin{itemize}
\item $y_{i}^{\prime }$ runs through the finitely many\footnote{%
They correspond to points over $x$ in the normalization of $\overline{\{y\}}$%
, see \cite[\S 6, Thm. 6.5 (4)]{MR0241408}. In particular there are only
finitely many such.} primes in $\widehat{\mathcal{O}}_{x}$ over $y$, and

\item $\widehat{\mathcal{O}}_{x,y_{i}^{\prime }}$ denotes the $y_{i}^{\prime
}$-adic completion of the localization of $\widehat{\mathcal{O}}_{x}$ at $%
y_{i}^{\prime }$, one could also write $\widehat{(\widehat{\mathcal{O}}%
_{x})_{y_{i}^{\prime }}}$. This ring is a $1$-dimensional complete regular
local domain. Thus, it is a complete DVR and the residue field is $\kappa
(y_{i}^{\prime })$ of $\limfunc{Spec}\widehat{\mathcal{O}}_{x}$.
\end{itemize}

\item $A(\{\eta \},\mathcal{O}_{X})=K:=\kappa (\eta )$ is the function field
of $X$.

\item $A(\{\eta \geq y\},\mathcal{O}_{X})=\widehat{K}_{y}:=\limfunc{Frac}%
\widehat{\mathcal{O}}_{y}$ is the field of fractions of $\widehat{\mathcal{O}%
}_{y}$.\newline
This is a $1$-local field with last residue field $\kappa \left( y\right) $.

\item $A(\{\eta \geq x\},\mathcal{O}_{X})=\widehat{\mathcal{O}}_{x}\otimes K$%
.

\item $A(\{\eta \geq y\geq x\},\mathcal{O}_{X})=\prod \widehat{K}%
_{x,y_{i}^{\prime }}$, where $\limfunc{Frac}\widehat{\mathcal{O}}%
_{x,y_{i}^{\prime }}$ is the field of fractions of $\widehat{\mathcal{O}}%
_{x,y_{i}^{\prime }}$. Each $\widehat{K}_{x,y_{i}^{\prime }}$ is a $2$-local
field with last residue field $\kappa (x)$. Mimicking Fig. \ref{lBT_5} its
structure is given by%
\begin{equation}
\begin{array}{ccccc}
\widehat{K}_{x,y_{i}^{\prime }} &  &  &  &  \\ 
\uparrow &  &  &  &  \\ 
\widehat{\mathcal{O}}_{x,y_{i}^{\prime }} & \longrightarrow & \kappa
(y_{i}^{\prime }) &  &  \\ 
&  & \uparrow &  &  \\ 
&  & (\widehat{\mathcal{O}}_{x}/y_{i}^{\prime }) & \longrightarrow & \kappa
\left( x\right) \text{.}%
\end{array}
\label{l346}
\end{equation}%
(Writing $\widehat{\mathcal{O}}_{x}/y_{i}^{\prime }$ we identify the point $%
y_{i}^{\prime }$ with its underlying prime ideal.)

\item $\widehat{K}_{x}:=\limfunc{Frac}\widehat{\mathcal{O}}_{x}$ (this does 
\textit{not} appear in the ad\`{e}les, but we fix the notation!)
\end{enumerate}
\end{lemma}

This computation is elementary with possibly one exception: The fact that
there are products appearing in $\prod \widehat{\mathcal{O}}%
_{x,y_{i}^{\prime }}$ or $\prod \widehat{K}_{x,y_{i}^{\prime }}$ relies on
the behaviour of completion at singularities. The products reduce to a 
\textit{single} factor whenever $\overline{\{y\}}$ is a \textit{normal}
scheme. In general there are as many factors as there are preimages of $x$
in the fiber of the normalization $\overline{\{y\}}^{\prime }\rightarrow 
\overline{\{y\}}\ni x$. This is explained in \cite[\S 6, Thm. 6.5 (4)]%
{MR0241408}, \cite[\S 1.1]{MR697316}.

\begin{example}
Consider $X=\mathbf{A}_{k}^{2}=\limfunc{Spec}k[s,t]$ and $\triangle =\{\eta
\geq y\geq x\}$ with $\eta =(0)$, $y=(s^{3}+s^{2}-t^{2})$, $x=(s,t)$ in
terms of prime ideals. The curve $\overline{\{y\}}$ is singular at $x$ and
the point has two preimages in the normalization $\overline{\{y\}}^{\prime }$%
. Correspondingly, $A(\triangle ,\mathcal{O}_{X})=\widehat{K}%
_{x,y_{1}^{\prime }}\oplus \widehat{K}_{x,y_{2}^{\prime }}$.
\end{example}

\begin{definition}
A height one prime in $\widehat{\mathcal{O}}_{x}$ is called a \emph{%
transcendental curve} if it does not lie in the fiber of a height one prime
under $\limfunc{Spec}\widehat{\mathcal{O}}_{x}\rightarrow \limfunc{Spec}%
\mathcal{O}_{x}$.
\end{definition}

\begin{example}
For $k=\mathbf{Q}$, $\mathbf{A}_{\mathbf{Q}}^{2}=\limfunc{Spec}\mathbf{Q}%
[s,t]$, $x=(0,0)$ and all $i\geq 1$ the primes $(s-t^{i}\exp t)$ constitute
an infinite set of transcendental curves in $\widehat{\mathcal{O}}_{x}\simeq 
\mathbf{Q}[[s,t]]$.
\end{example}

\begin{lemma}
\label{idelelemma_genericfibercomplete}$A(\{\eta \geq x\},\mathcal{O}_{X})=%
\widehat{\mathcal{O}}_{x}\otimes K$ is a principal ideal domain with
infinitely many maximal ideals. They correspond bijectively to the
transcendental curves in $\widehat{\mathcal{O}}_{x}$.
\end{lemma}

\begin{proof}
$\widehat{\mathcal{O}}_{x}$ is two-dimensional. It is regular, thus
factorial, and so all height one primes are principal. Tensoring with $K$
kills the maximal ideal (among others), so the remaining height one primes
become maximal. Then use the Example (replace $\exp t$ by another
transcendental function if $\limfunc{char}k>0$). Note that $\widehat{%
\mathcal{O}}_{x}\otimes K$ is the fiber of the generic point under $\limfunc{%
Spec}\widehat{\mathcal{O}}_{x}\rightarrow \limfunc{Spec}\mathcal{O}_{x}$,
giving the second claim.
\end{proof}

We shall need the following general \textquotedblleft Gersten
conjecture\textquotedblright -type property for big cycle modules:

\begin{proposition}
\label{Prop_GerstenConjForRostComplex}(Rost \cite[Thm. 6.1]{MR1418952})
Suppose $X:=\limfunc{Spec}\mathcal{O}_{Y,x}$ is the spectrum of a local ring
of a smooth scheme $Y/k$ and let $M_{\ast }$ be a (big) cycle module. Then
the cycle complex $C^{\bullet }\left( X,M_{\ast }\right) $ as in eq. \ref%
{l404} is exact.
\end{proposition}

This shows that the sheafification of eq. \ref{l404} provides a flasque
resolution of $\mathcal{M}_{\ast }$, \cite[Cor. 6.5]{MR1418952}. Moreover,
we need the analogous statement for complete rings. Such a statement is not
proven in \cite{MR1418952} (and would not fit in with the axioms in \textit{%
loc. cit.}), but the general principle of proof is well-known since
Quillen's breakthrough paper on algebraic $K$-theory \cite{MR0338129} and
generalizes to cycle modules:

\begin{proposition}
\label{Prop_GerstenConjForRostComplexCompleteVersion}Suppose $X/k$ is an
integral equicharacteristic complete regular local scheme and let $M_{\ast }$
be a big cycle module. Then the cycle complex $C^{\bullet }\left( X,M_{\ast
}\right) $ as in eq. \ref{l404} is exact.
\end{proposition}

We defer the proof to \S \ref{section_AppendixCompleteGersten}. As a result:

\begin{proposition}
\label{prop_alllocalidelesgersten}The complex $C^{\bullet }\left( \limfunc{%
Spec}A(\triangle ,\mathcal{O}_{X}),M_{\ast }\right) $ is exact for all rings 
$A(\triangle ,\mathcal{O}_{X})$ appearing in the list in Lemma \ref%
{computelocalcomps}.
\end{proposition}

\begin{proof}
For the $A(\triangle ,\mathcal{O}_{X})$ which are fields this is obvious;
for the complete local rings use Prop. \ref%
{Prop_GerstenConjForRostComplexCompleteVersion}. Only $A(\{\eta \geq x\},%
\mathcal{O}_{X})=\widehat{\mathcal{O}}_{x}\otimes K$ requires an argument.
The diagram%
\begin{equation}
\begin{xy} \xymatrix{ M_{\ast }(\widehat{K}_{x}) \ar[r] &
\coprod_{\widehat{y}}M_{\ast -1}(\kappa (\widehat{y})) \ar[r] & 0 & \\
M_{\ast }(\widehat{K}_{x}) \ar[r] \ar[u]_{\cong} & \coprod_{y^{\prime
}}M_{\ast -1}(\kappa (y^{\prime})) \ar[r] \ar[u] & M_{\ast -2}(\kappa (x))
\ar[r] & 0 \\ M_{\ast }(K) \ar[r] \ar[u] & \coprod_{y}M_{\ast -1}(\kappa
(y)) \ar[r] \ar[u] & M_{\ast -2}(\kappa (x)) \ar[r] \ar[u]_{\cong} & 0. }
\end{xy}  \label{l441}
\end{equation}%
commutes, where the the top row is the cycle complex in question, so by
Lemma \ref{idelelemma_genericfibercomplete} the variable $\hat{y}$ runs
through the transcendental curves of $\widehat{\mathcal{O}}_{x}$; the middle
is the cycle complex of $\widehat{\mathcal{O}}_{x}$ (so that $y^{\prime }$
runs through all height one primes) and exact by Prop. \ref%
{Prop_GerstenConjForRostComplexCompleteVersion}; the bottom row the one of $%
\mathcal{O}_{x}$ and exact by Prop. \ref{Prop_GerstenConjForRostComplex}.
The vertical arrows are functorially induced from the flat pullbacks along $%
\limfunc{Spec}\widehat{\mathcal{O}}_{x}\otimes K\rightarrow \limfunc{Spec}%
\widehat{\mathcal{O}}_{x}\rightarrow \limfunc{Spec}\mathcal{O}_{x}$. Only
surjectivity in the top row needs an argument. Map the elements $(\alpha _{%
\hat{y}})\in M_{\ast -1}(\kappa (\widehat{y}))$ to the middle row $-$ the
upward arrow is a projector on a subset of summands; so it is canonical
split by the corresponding inclusion. If they map to a non-zero element in $%
M_{\ast -2}(\kappa (x))$, use surjectivity in the last row to find a lift to 
$\coprod_{y}M_{\ast -1}(\kappa (y))$. Change the element in the middle row
by this lift so that it maps to zero in $M_{\ast -2}(\kappa (x))$. Exactness
of the middle row yields a lift in $M_{\ast }(\widehat{K}_{x})$, proving
surjectivity in the top row by commutativity of the diagram.
\end{proof}

\section{\label{section_IdelesNoProduct}Id\`{e}les with cycle module
coefficients}

As in the previous section $X/k$ is an integral smooth surface and $M_{\ast
} $ a big cycle module as in \S \ref{section_CycleModules}.\medskip \newline
Contrary to \S \ref{section_IdelesWithQCCoeffs} we will speak of \textit{id%
\`{e}les} instead of \textit{ad\`{e}les }from now on. The conventions for
this in the literature are blurry. For a curve over a field the classical ad%
\`{e}les arise from the theory of \S \ref{section_IdelesWithQCCoeffs}, which
is a good motivation to speak of `ad\`{e}les'. The classical id\`{e}les
could be seen as the degree one part of $K$-theory id\`{e}les in the sense
of the present theory, so we call them `id\`{e}les'.

As before, write $\eta \in X^{(0)}$ for the generic point and $y\in X^{(1)}$
and $x\in X^{(2)}$ with $x\in \overline{\{y\}}$ are arbitrary.

\begin{notation}
We shall write%
\begin{equation}
\prod\nolimits_{y}M_{\ast }(-)\text{\quad as a shorthand for\quad }%
F:U\rightarrow \prod\nolimits_{y\in U^{\left( 1\right) }}M_{\ast }(-)\text{,}
\label{l403}
\end{equation}%
where the latter is a (flasque) Zariski sheaf of $\mathbf{Z}$-graded abelian
groups. Analogously for $\prod_{x}$, where $x$ runs through all closed
points $U^{\left( 2\right) }$, e.g. $U\rightarrow \prod_{x\in U^{\left(
2\right) }}M_{\ast }(-)$. Writing $\prod_{x,y_{i}^{\prime }}$, the second
parameter $y_{i}^{\prime }$ runs through the height one primes in $\widehat{%
\mathcal{O}}_{x}$ except the transcendental curves, e.g. $U\rightarrow
\prod_{x\in U^{\left( 2\right) }}\prod_{y_{i}^{\prime }}M_{\ast }(-)$ with $%
y_{i}^{\prime }$ any non-transcendental curve in $\widehat{\mathcal{O}}_{x}$.
\end{notation}

\begin{example}
$\prod_{x,y_{i}^{\prime }}M_{\ast }(\widehat{\mathcal{O}}_{x,y_{i}^{\prime
}})$ denotes the flasque sheaf%
\begin{equation*}
U\rightarrow \tprod\nolimits_{x\in U^{\left( 2\right)
}}\tprod\nolimits_{y_{i}^{\prime }\in (\limfunc{Spec}\widehat{\mathcal{O}}%
_{x})^{(1)}\text{, non-transc}.}M_{\ast }(\widehat{\mathcal{O}}%
_{x,y_{i}^{\prime }})\text{.}
\end{equation*}
\end{example}

Recall that $M_{\ast }(-)$ for a ring is defined by eq. \ref%
{l322_DefOfUnramCycles}. These products are supposed to mimick the product
appearing in eq. \ref{TATEMATRIX_l6}. We have seen in Example \ref%
{Example_AutomaticFinitenessCondition} that the Beilinson ad\`{e}les
automatically impose finiteness conditions as they occur for example in eq. %
\ref{lX_2} and eq. \ref{lX_9}. As explained in \S \ref%
{section_IdelesWithQCCoeffs}, a literal translation of Beilinson's ad\`{e}%
les for quasi-coherent sheaves to $K$-theory sheaves does not work. The same
applies to more general cycle module coefficients. Thus, we will use the
cycle module associated to the local components $A(\triangle ,\mathcal{O}%
_{X})$ for $\triangle $ a singleton set, but patch them together manually to
imitate (vaguely!) the finiteness conditions as they would come from
Beilinson's construction. Naturally, carrying out this process `by hand'
requires a slightly unpleasant amount of individual definitions:

\begin{notation}
The symbol $\tilde{\forall}x$ will mean: for all $x$ with possibly finitely
many exceptions. We also say \textquotedblleft for almost all $x$%
\textquotedblright .
\end{notation}

\begin{definition}
\label{recicircleconditionDef}For the sheaves as introduced in eq. \ref{l403}%
, we introduce the following variations which impose manual finiteness
conditions:

\begin{enumerate}
\item The prime superscript in $\prod_{y}^{\prime }M_{\ast }(\widehat{K}%
_{y}) $ means that we consider only those elements $(\alpha _{y})_{y}$ such
that $\tilde{\forall}y\in X^{(1)}:$ $\alpha _{y}\in M_{\ast }(\widehat{%
\mathcal{O}}_{y})$.

\item The triple prime superscript in $\prod_{x,y_{i}^{\prime }}^{\prime
\prime \prime }M_{\ast }(\widehat{K}_{x,y_{i}^{\prime }})$ means that we
consider only those elements $(\alpha _{x,y_{i}^{\prime }})_{x,y_{i}^{\prime
}}$ such that

\begin{description}
\item[(a)] $\forall x\,\tilde{\forall}y_{i}^{\prime }:\alpha
_{x,y_{i}^{\prime }}\in M_{\ast }(\widehat{\mathcal{O}}_{x,y_{i}^{\prime }})$%
.

\item[(b)] \label{recicirclecondition}$\tilde{\forall}x:\sum_{y_{i}^{\prime
}}\partial _{x}^{y_{i}^{\prime }}\partial _{y_{i}^{\prime }}^{\widehat{K}%
_{x,y_{i}^{\prime }}}(\alpha _{x,y_{i}^{\prime }})=0\in M_{\ast -2}(\kappa
(x))$.
\end{description}
\end{enumerate}
\end{definition}

The condition \textbf{(a)} ensures that the sum in \textbf{(b)} is finite.

\begin{example}
Note that (a) still allows $\alpha _{x,y_{i}^{\prime }}\notin M_{\ast }(%
\widehat{\mathcal{O}}_{x,y_{i}^{\prime }})$ for infinitely many $%
y_{i}^{\prime }$ as shown in this example:%

\[
{\includegraphics[
height=0.5419in,
width=1.1102in
]%
{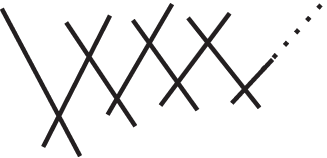}%
}
\]

The lines represent points $y_{i}^{\prime }$ with $\alpha _{x,y_{i}^{\prime
}}\notin M_{\ast }(\widehat{\mathcal{O}}_{x,y_{i}^{\prime }})$. In this
particular example infinitely many closed points $x$ are involved, so the
condition (b) is non-trivial.
\end{example}

\textit{Tricomplexes:} For every abelian category $P$ let $CP$ denote the
abelian category of bounded complexes in $P$. Then $CCP$ can be identified
with the category of bounded bicomplexes. There is the standard total
complex functor $CCP\rightarrow CP$. Applying this to $CP$, $CCCP$ can be
viewed as the abelian category of \emph{tricomplexes} and each tricomplex
has a natural total complex given by the composition $CC(CP)\rightarrow
CCP\rightarrow CP$. The sign correction of the differential in the total
complex then induces the correct sign for the given complex. Depending on
one's preferred sign convention, the overall sign in explicit formulas might
come out opposite.

\begin{definition}
\label{def_base_ideles}We define a complex of sheaves of $\mathbf{Z}$-graded
abelian groups%
\begin{equation}
0\longrightarrow \mathcal{M}_{\ast }\longrightarrow \mathbf{I}_{\mathcal{M}%
}^{0}\longrightarrow \mathbf{I}_{\mathcal{M}}^{1}\longrightarrow \mathbf{I}_{%
\mathcal{M}}^{2}\longrightarrow 0  \label{l314}
\end{equation}%
as the total complex of the following tricomplex:%
\begin{equation}
\begin{xy} \xymatrix@!0@C=24mm@R=9mm{ &
{\prod_{{x}}M_{\ast}(\widehat{\mathcal{O}}_{{x}}\otimes K)} \ar[dl] & &
{\prod_{{x}}M_{\ast }(\widehat{\mathcal{O}}_{{x}}),} \ar[dl] \ar[ll] \\
{\prod_{{x},{y}_{i}^{\prime}}^{\prime\prime\prime}M_{\ast}(%
\widehat{K}_{{x},{y}_{i}^{\prime}})} & &
{\prod_{{x},{y}_{i}^{\prime}}M_{\ast
}(\widehat{\mathcal{O}}_{{x},{y}_{i}^{\prime}})} \ar [ll] & \\ &
{\coprod\nolimits_{\eta}M_{\ast}(K)} \ar[dl] \ar '[u][uu] & &
{\mathcal{M}_{\ast}} \ar[dl] \ar'[l][ll] \ar[uu] \\
{\prod_{{y}}^{\prime}M_{\ast}(\widehat{K}_{{y}})} \ar [uu] & &
{\prod_{{y}}M_{\ast }(\widehat{\mathcal{O}}_{{y}})} \ar[uu] \ar[ll] & }
\end{xy}  \label{lX_200}
\end{equation}%
The arrows are induced from the flat pullbacks along the respective
completion and localization maps.
\end{definition}

As $X$ has only a single generic point $\eta $, $\prod\nolimits_{\eta
}M_{\ast }(K)$ is a redundant formulation, but it stresses the general
pattern.

\begin{remark}
The rings $\widehat{\mathcal{O}}_{x}$, $\widehat{\mathcal{O}}%
_{x,y_{i}^{\prime }}$,\ldots\ appearing in the tricomplex are precisely the
local components $A(-,\mathcal{O}_{X})$ of the Beilinson ad\`{e}les with
coefficients in the structure sheaf $\mathcal{O}_{X}$, see Lemma \ref%
{computelocalcomps} (albeit with the degenerate simplices removed).
Moreover, the complex $\mathbf{A}_{\mathcal{O}_{X}}^{\bullet }$ of Thm. \ref%
{lX_BeilinsonResolutionThm} can $-$ (not quite, but) roughly $-$ be written
down as the total complex of a tricomplex resembling the one in Fig. \ref%
{lX_200}.
\end{remark}

\begin{lemma}
The morphisms in the tricomplex in eq. \ref{lX_200} are well-defined.
\end{lemma}

\begin{proof}
The claim is obvious, except possibly for checking that the individual
finiteness conditions are being respected. But this is straightforward.
\end{proof}

This construction boils down to a complex%
\begin{eqnarray}
\mathcal{M}_{\ast } &\rightarrow &\prod\nolimits_{\eta }M_{\ast }(K)\oplus
\prod\nolimits_{y}M_{\ast }(\widehat{\mathcal{O}}_{y})\oplus
\prod\nolimits_{x}M_{\ast }(\widehat{\mathcal{O}}_{x})  \notag \\
&\rightarrow &\prod\nolimits_{y}^{\prime }M_{\ast }(\widehat{K}_{y})\oplus
\prod\nolimits_{x}M_{\ast }(\widehat{\mathcal{O}}_{x}\otimes K)\oplus
\prod\nolimits_{x,y_{i}^{\prime }}M_{\ast }(\widehat{\mathcal{O}}%
_{x,y_{i}^{\prime }})  \label{l9622} \\
&\rightarrow &\prod\nolimits_{x,y_{i}^{\prime }}^{\prime \prime \prime
}M_{\ast }(\widehat{K}_{x,y_{i}^{\prime }})\rightarrow 0\text{.}  \notag
\end{eqnarray}%
Except $\mathcal{M}_{\ast }$ the sheaves are all flasque. If $M_{\ast }$ is
understood, we write $\mathbf{I}^{i}$ instead of $\mathbf{I}_{\mathcal{M}%
}^{i}$. We shall also write $\mathbf{I}^{0}=\mathbf{I}^{(0)}\oplus \mathbf{I}%
^{(1)}\oplus \mathbf{I}^{(2)}$; $\mathbf{I}^{1}=\mathbf{I}^{(01)}\oplus 
\mathbf{I}^{(02)}\oplus \mathbf{I}^{(12)}$; $\mathbf{I}^{2}=\mathbf{I}%
^{(012)}$ according to the direct sum decompositions of the three terms in
eq. \ref{l9622} (here the superscripts indicate the codimensions of the
points appearing in the respective flags). This is convenient to stress what
component sections of these sheaves are associated to; we will usually use
superscripts using this indexing as in $(f^{0},f_{y}^{1},f_{x}^{2})$; $%
(f_{y}^{01},f_{x}^{02},f_{x,y_{i}^{\prime }}^{12})$; $(f_{x,y_{i}^{\prime
}}^{012})$ respectively. The maps all turn out to be induced from signed
diagonal maps and the appropriate restrictions, e.g. the first arrow locally
unwinds as%
\begin{equation*}
f\mapsto (f,\prod_{y}\limfunc{res}\nolimits_{K}^{\widehat{K}_{y}}f,\prod_{x}%
\limfunc{res}\nolimits_{K}^{\widehat{K}_{x}}f)\text{,}
\end{equation*}%
the last as $(f_{y}^{01},f_{x}^{02},f_{x,y_{i}^{\prime }}^{12})\mapsto
(f_{x,y_{i}^{\prime }}^{12}-\limfunc{res}\nolimits_{\widehat{K}_{x}}^{%
\widehat{K}_{x,y_{i}^{\prime }}}f_{x}^{02}+\limfunc{res}\nolimits_{\widehat{K%
}_{y}}^{\widehat{K}_{x,y_{i}^{\prime }}}f_{y}^{01})$.

Write $A^{i}(-,M_{\ast }):=H^{i}(C^{\bullet }(-,M_{\ast }))$ to denote the
cohomology of the Rost cycle complex.

\begin{theorem}
(Main Theorem) \label{marker_MAINTHM}Let $X/k$ be a smooth integral surface, 
$M_{\ast }$ a big cycle module and $\mathcal{M}_{\ast }$ the associated
Zariski sheaf.

\begin{enumerate}
\item \label{mainthmp1}Then%
\begin{equation*}
\mathcal{M}_{\ast }\longrightarrow \mathbf{I}_{\mathcal{M}%
}^{0}\longrightarrow \mathbf{I}_{\mathcal{M}}^{1}\longrightarrow \mathbf{I}_{%
\mathcal{M}}^{2}\longrightarrow 0
\end{equation*}%
is a flasque resolution.

\item \label{mainthmp2}The isomorphisms%
\begin{equation*}
\alpha ^{i}:H^{i}(X,\mathbf{I}_{\mathcal{M}}^{\bullet })\rightarrow
A^{i}(X,M_{\ast })
\end{equation*}%
are explicitly given by%
\begin{eqnarray}
\alpha ^{0} &:&(f^{0},f_{y}^{1},f_{x}^{2})\mapsto f^{0}\in M_{\ast }(\kappa
(\eta ))  \notag \\
\alpha ^{1} &:&(f_{y}^{01},f_{x}^{02},f_{x,y_{i}^{\prime }}^{12})\mapsto
\partial _{y}^{\widehat{K}_{y}}(f_{y}^{01})\in M_{\ast -1}(\kappa (y))
\label{lX_10} \\
\alpha ^{2} &:&(f_{x,y_{i}^{\prime }}^{012})\mapsto \sum_{y_{i}^{\prime
}}(\partial _{x}^{y_{i}^{\prime }}\circ \partial _{y_{i}^{\prime }}^{%
\widehat{K}_{x,y_{i}^{\prime }}})(f_{x,y_{i}^{\prime }}^{012})\in M_{\ast
-2}(\kappa (x))  \label{lX_11}
\end{eqnarray}
\end{enumerate}
\end{theorem}

Part \ref{mainthmp1} is the analogue of Beilinson's resolution of
quasi-coherent sheaves in Thm. \ref{lX_BeilinsonResolutionThm}. The rest of
the section is devoted to the proof. We fix $M_{\ast }$ and write $\mathbf{I}%
^{i}$ instead of $\mathbf{I}_{\mathcal{M}}^{i}$.\newline
Firstly, we define 'id\`{e}les with a reciprocity constraint'. These sheaves
are defined as kernels:%
\begin{eqnarray*}
0 &\longrightarrow &\prod\nolimits_{y}^{\prime +\text{recip.}}M_{\ast }(%
\widehat{K}_{y})\longrightarrow \prod\nolimits_{y}^{\prime }M_{\ast }(%
\widehat{K}_{y})\overset{\partial _{x}^{y}\partial _{y}}{\longrightarrow }%
\coprod\nolimits_{x}M_{\ast -2}(\kappa (x)) \\
0 &\longrightarrow &\prod\nolimits_{x,y_{i}^{\prime }}^{\prime \prime \prime
+\text{recip.}}M_{\ast }(\widehat{K}_{x,y_{i}^{\prime }})\longrightarrow
\prod\nolimits_{x,y_{i}^{\prime }}^{\prime \prime \prime }M_{\ast }(\widehat{%
K}_{x,y_{i}^{\prime }})\overset{\partial _{x}^{y_{i}^{\prime }}\partial
_{y_{i}^{\prime }}}{\longrightarrow }\coprod\nolimits_{x}M_{\ast -2}(\kappa
(x))
\end{eqnarray*}%
We shall keep this notation for later use below. Here\ `reciprocity
constraint' refers to the fact that an element in $M_{\ast }(K)$, when being
mapped diagonally to $\prod\nolimits_{y}^{\prime }M_{\ast }(\widehat{K}_{y})$%
, automatically satisfies $\sum_{y}\partial _{x}^{y}\partial _{y}x=0$. As
this sum runs over all curves $y$ through a fixed closed point $x$, this is
sometimes called a `reciprocity law around a point'.

Now we focus on the bicomplex which constitutes the bottom face of the
tricomplex of Def. \ref{def_base_ideles}. The total complex of this
bicomplex turns out to be homologically concentrated in a single degree:

\begin{lemma}
\label{Lemma_FrontAdeles2DAreExact}The sequence of sheaves%
\begin{equation*}
0\longrightarrow \mathcal{M}_{\ast }\longrightarrow K\oplus
\prod\nolimits_{y}M_{\ast }(\widehat{\mathcal{O}}_{y})\longrightarrow
\prod\nolimits_{y}^{\prime +\text{recip.}}M_{\ast }(\widehat{K}%
_{y})\longrightarrow 0
\end{equation*}%
is exact.
\end{lemma}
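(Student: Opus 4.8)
The statement is the cycle-module, reciprocity-twisted analogue of the classical ($1$-dimensional) id\`{e}le resolution, and I would prove it by checking exactness at each of the three spots. At the first two spots it suffices to argue on sections over an arbitrary Zariski open $U\subseteq X$, since exactness on sections over every open implies exactness of the sheaf maps (pass to stalks, which are filtered colimits). Here the left-hand map is the diagonal embedding $\alpha\mapsto\bigl(\alpha,(\operatorname{res}_{K_{\eta}}^{\widehat{K}_{y}}\alpha)_{y\in U^{(1)}}\bigr)$ and the right-hand map is the difference map $d'\colon(\alpha,(\gamma_{y})_{y})\mapsto(\operatorname{res}_{K_{\eta}}^{\widehat{K}_{y}}\alpha-\gamma_{y})_{y}$ (the differentials of the diagonal complex of fig.\ \ref{l371}). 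The diagonal embedding is well-defined since $\partial_{y}^{\widehat{K}_{y}}\operatorname{res}_{K_{\eta}}^{\widehat{K}_{y}}\alpha=\partial_{y}^{\eta}\alpha=0$ by \textbf{R3a} (the extension $\widehat{\mathcal{O}}_{y}/\mathcal{O}_{y}$ is unramified with trivial residue field extension) and the unramifiedness of $\alpha$, so each component lies in $M_{\ast}(\widehat{\mathcal{O}}_{y})=\ker\partial_{y}^{\widehat{K}_{y}}$; it is injective because its generic-point component is the tautological inclusion $\mathcal{M}_{\ast}(U)=M_{\ast}(U)\hookrightarrow M_{\ast}(K_{\eta})$. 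That $d'$ lands in $\prod_{y}^{\prime+\text{recip.}}M_{\ast}(\widehat{K}_{y})$ uses \textbf{FD} for the finiteness constraint and \textbf{C} for the reciprocity constraint (apply \textbf{C} to the $2$-dimensional local domain $\mathcal{O}_{X,x}$, as in the Remark above), and for exactness in the middle one notes that $(\alpha,(\gamma_{y})_{y})\in\ker d'(U)$ means $\gamma_{y}=\operatorname{res}_{K_{\eta}}^{\widehat{K}_{y}}\alpha$ for all $y$; since each $\gamma_{y}\in M_{\ast}(\widehat{\mathcal{O}}_{y})$ this forces $\partial_{y}^{\eta}\alpha=0$ for every $y\in U^{(1)}$, hence $\alpha\in M_{\ast}(U)=\mathcal{M}_{\ast}(U)$ and the element is its image; the reverse inclusion is clear.

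The real content is surjectivity of $d'$, which I would check on stalks. Fix a point $p\in X$ and a germ at $p$ of the target, represented by a family $(\beta_{y})_{y\in U^{(1)}}$ on some $U\ni p$ satisfying the finiteness condition \textbf{(1)} and the reciprocity constraint. As $\{y:\beta_{y}\notin M_{\ast}(\widehat{\mathcal{O}}_{y})\}$ is finite, I would first shrink $U$ by deleting the members of this set that do not contain $p$ (a closed subset avoiding $p$), so that afterwards $\beta_{y}\in M_{\ast}(\widehat{\mathcal{O}}_{y})$ for every $y\in U^{(1)}$ with $p\notin\overline{\{y\}}$. Now pass to $\mathcal{O}:=\mathcal{O}_{X,p}$ --- a local ring of the smooth $k$-scheme $X$, with fraction field $K_{\eta}$ --- and form the finite family $c:=(\partial_{y}^{\widehat{K}_{y}}\beta_{y})_{y}$ with $y$ ranging over the height $1$ primes of $\mathcal{O}$, i.e.\ the curves through $p$; this is an element of $C^{1}(\operatorname{Spec}\mathcal{O},M_{\ast})$. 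If $p$ is a closed point, the reciprocity constraint at $p$ says exactly that $c$ is a cocycle; if $\dim\mathcal{O}\le 1$ there are no codimension-$2$ points of $\operatorname{Spec}\mathcal{O}$ and the cocycle condition is vacuous. By the Gersten property, Prop.\ \ref{Prop_GerstenConjForRostComplex}, the Rost complex $C^{\bullet}(\operatorname{Spec}\mathcal{O},M_{\ast})$ is exact, so $c$ is a coboundary: there is $\alpha\in M_{\ast}(K_{\eta})=C^{0}(\operatorname{Spec}\mathcal{O},M_{\ast})$ with $\partial_{y}^{\eta}\alpha=\partial_{y}^{\widehat{K}_{y}}\beta_{y}$ for all curves $y$ through $p$.

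Finally, by \textbf{FD} only finitely many curves $y\subset X$ have $\partial_{y}^{\eta}\alpha\neq 0$; shrinking $U$ a second time to delete those among them that avoid $p$, I obtain $\partial_{y}^{\eta}\alpha=\partial_{y}^{\widehat{K}_{y}}\beta_{y}$ for every $y\in U^{(1)}$ (by construction when $p\in\overline{\{y\}}$, and $\partial_{y}^{\eta}\alpha=0=\partial_{y}^{\widehat{K}_{y}}\beta_{y}$ otherwise, the latter since $\beta_{y}\in M_{\ast}(\widehat{\mathcal{O}}_{y})$). Setting $\gamma_{y}:=\operatorname{res}_{K_{\eta}}^{\widehat{K}_{y}}\alpha-\beta_{y}$ gives $\partial_{y}^{\widehat{K}_{y}}\gamma_{y}=\partial_{y}^{\eta}\alpha-\partial_{y}^{\widehat{K}_{y}}\beta_{y}=0$, so $\gamma_{y}\in M_{\ast}(\widehat{\mathcal{O}}_{y})$, and $(\alpha,(\gamma_{y})_{y})$ is a section of the middle sheaf over the shrunk $U$ with $d'(\alpha,(\gamma_{y})_{y})=(\beta_{y})_{y}$, realizing the germ in the image of $d'$. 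I expect the only genuine obstacle to be this surjectivity argument, and its crux is the reduction to Prop.\ \ref{Prop_GerstenConjForRostComplex}: the reciprocity constraint cutting out $\prod_{y}^{\prime+\text{recip.}}$ was designed precisely so as to be the cocycle hypothesis the Gersten property needs in order to produce the global element $\alpha$ on $\operatorname{Spec}\mathcal{O}_{X,p}$ --- everything else is bookkeeping with \textbf{FD}.
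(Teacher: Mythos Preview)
Your proof is correct and follows essentially the same approach as the paper's: injectivity from the definition of $\mathcal{M}_{\ast}$ as a subgroup of $M_{\ast}(K_{\eta})$, middle exactness by applying $\partial_{y}^{\widehat{K}_{y}}$ and invoking \textbf{R3a}, and surjectivity on stalks by feeding $(\partial_{y}^{\widehat{K}_{y}}\beta_{y})_{y}$ into the Gersten exactness of Prop.~\ref{Prop_GerstenConjForRostComplex} for $\mathcal{O}_{X,p}$, where the reciprocity constraint supplies the needed cocycle condition. Your write-up is in fact slightly more careful than the paper's in two respects: you explicitly treat non-closed stalk points $p$ (where the cocycle condition is vacuous), and you spell out the two shrinkings of $U$ via \textbf{FD}.
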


We may rephrase this as follows: the bottom face of our tricomplex is quasi-isomorphic to the
sheaf $\coprod\nolimits_{x}M_{\ast -2}(\kappa (x))$, placed in degree two.

\begin{proof}
The injectivity is clear since for each open set $U$ the group of sections $%
\mathcal{M}_{\ast }(U)$ is defined as a subgroup of $M_{\ast }(K)$.
Exactness in the middle is easy: Let $x$ be a point in $X$, $U_{x}\ni x$
some open neighbourhood. Suppose local sections $\alpha \in M_{\ast }(K)$
and $\alpha _{y}\in M_{\ast }(\widehat{\mathcal{O}}_{y})$ over $U_{x}$ are
given and go to zero on the right. Then for all codimension one points $y\in
U_{x}^{\left( 1\right) }$ we have $\limfunc{res}\nolimits_{K}^{\widehat{K}%
_{y}}\alpha -\alpha _{y}=0\in M_{\ast }(\widehat{K}_{y})$. Thus, taking the
boundary at $y$ in $\widehat{\mathcal{O}}_{y}$ we obtain%
\begin{equation*}
0=\partial _{y}^{\widehat{K}_{y}}\limfunc{res}\nolimits_{K}^{\widehat{K}%
_{y}}\alpha -\partial _{y}^{\widehat{K}_{y}}\alpha _{y}=\limfunc{res}%
\nolimits_{\kappa \left( y\right) }^{\kappa \left( y\right) }\partial
_{y}^{K}\alpha -\partial _{y}^{\widehat{K}_{y}}\alpha _{y}
\end{equation*}%
by the compatibility of flat pullbacks (here already made explicit as $%
\limfunc{res}\nolimits_{K}^{\widehat{K}_{y}}$) with the differential in the
cycle complex, axiom \textbf{R3a}. We also have $\partial _{y}^{\widehat{K}%
_{y}}\alpha _{y}=0$ since $\alpha _{y}\in M_{\ast }(\widehat{\mathcal{O}}%
_{y})$ and so we deduce $\partial _{y}^{K}\alpha =0$. However, this shows
that $\alpha \in \mathcal{M}_{\ast }\left( U_{x}\right) $. For the
surjectivity suppose we are given $(\alpha _{y})_{y}$ on the right-hand
side. The cycle complex of $\limfunc{Spec}\mathcal{O}_{x}$ is exact by Prop. %
\ref{Prop_GerstenConjForRostComplex}:%
\begin{equation*}
0\rightarrow M_{\ast }(\mathcal{O}_{x})\rightarrow M_{\ast }(K_{\eta
})\rightarrow \coprod\nolimits_{y\in (\mathcal{O}_{x})^{\left( 1\right)
}}M_{\ast -1}(\kappa (y))\rightarrow M_{\ast -2}(\kappa (x))\rightarrow 0%
\text{,}
\end{equation*}%
The element $(\partial _{y}^{\widehat{K}_{y}}\alpha _{y})$, placed in the
third term of this sequence, admits a preimage $f\in M_{\ast }(K)$ (\textit{%
Proof:} by the finiteness condition $\prod\nolimits^{\prime }$ this is
non-zero only for finitely many points $y$ and the reciprocity constraint
shows that the element goes to zero on the right). Secondly, note that $%
\beta _{y}:=\limfunc{res}\nolimits_{K}^{\widehat{K}_{y}}f-\alpha _{y}$
satisfies $\partial _{y}^{\widehat{K}_{y}}\beta _{y}=0$ for all $y$ in the
neighbourhood, so $\beta _{y}\in M_{\ast }(\widehat{\mathcal{O}}_{y})$. It
is now clear that $(f,\beta _{y})$ is a preimage of $\alpha _{y}$.
\end{proof}

Now we repeat this analysis with the top face of the tricomplex. Again, the
total complex of this bicomplex turns out to be homologically concentrated
in degree two:

\begin{lemma}
\label{Lemma_BackAdeles2DAreExact}The sequence of sheaves%
\begin{align*}
& 0\rightarrow \prod\nolimits_{x}M_{\ast }(\widehat{\mathcal{O}}%
_{x})\rightarrow \prod\nolimits_{x}M_{\ast }(\widehat{\mathcal{O}}%
_{x}\otimes K)\oplus \prod\nolimits_{x,y_{i}^{\prime }}M_{\ast }(\widehat{%
\mathcal{O}}_{x,y_{i}^{\prime }})\cdots \\
& \qquad \qquad \qquad \qquad \cdots \rightarrow
\prod\nolimits_{x,y_{i}^{\prime }}^{\prime \prime \prime +\text{recip.}%
}M_{\ast }(\widehat{K}_{x,y_{i}^{\prime }})\rightarrow 0
\end{align*}%
is exact.
\end{lemma}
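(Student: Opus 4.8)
The plan is to run the mirror image of the proof of Lemma~\ref{Lemma_FrontAdeles2DAreExact}, with the complete regular local ring $\widehat{\mathcal{O}}_{x}$ now playing the role that the surface $X$ and its local rings $\mathcal{O}_{x}$ played there. First I would reduce to a single closed point: all three sheaves in the sequence have the form $U\mapsto\prod_{x\in U^{(2)}}(\cdots)_{x}$, the differentials act factorwise in $x$, and the conditions ``$'''$'' and ``$+\text{recip.}$'' are imposed separately for each $x$, so the target sheaf is a genuine product over $x$ as well. Since filtered colimits are exact, it suffices to check exactness of sections over every open $U$, and by the product decomposition this amounts to exactness, for each fixed closed point $x$, of the complex of graded abelian groups
\[
0\rightarrow M_{\ast}(\widehat{\mathcal{O}}_{x})\rightarrow M_{\ast}(\widehat{\mathcal{O}}_{x}\otimes_{\mathcal{O}}K)\oplus\prod\nolimits_{y_{i}^{\prime}}M_{\ast}(\widehat{\mathcal{O}}_{x,y_{i}^{\prime}})\overset{d}{\rightarrow}\prod\nolimits_{y_{i}^{\prime}}^{\prime+\text{recip.}}M_{\ast}(\widehat{K}_{x,y_{i}^{\prime}})\rightarrow0,
\]
where $y_{i}^{\prime}$ runs over the non-transcendental height $1$ primes of $\widehat{\mathcal{O}}_{x}$, the target is $\ker(\sum_{y_{i}^{\prime}}\partial_{x}^{y_{i}^{\prime}}\partial_{y_{i}^{\prime}})$ inside the restricted product (``$\alpha_{y_{i}^{\prime}}\in M_{\ast}(\widehat{\mathcal{O}}_{x,y_{i}^{\prime}})$ for almost all $y_{i}^{\prime}$''), the first map is $\alpha\mapsto(\alpha,(\operatorname{res}^{\widehat{K}_{x,y_{i}^{\prime}}}_{\widehat{K}_{x}}\alpha)_{y_{i}^{\prime}})$, and $d(\alpha,(\beta_{y_{i}^{\prime}}))=(\operatorname{res}^{\widehat{K}_{x,y_{i}^{\prime}}}_{\widehat{K}_{x}}\alpha-\beta_{y_{i}^{\prime}})_{y_{i}^{\prime}}$ (up to sign).

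The three inputs I would use are: Prop.~\ref{Prop_GerstenConjForRostComplexCompleteVersion} for the complete regular local ring $R:=\widehat{\mathcal{O}}_{x}$, giving exactness of its Rost complex with $C^{0}=M_{\ast}(\widehat{K}_{x})$ and with $C^{1}(\operatorname{Spec}R,M_{\ast})$ a coproduct over \emph{all} height $1$ primes; the same proposition for each complete discrete valuation ring $\widehat{\mathcal{O}}_{x,y_{i}^{\prime}}$, giving $M_{\ast}(\widehat{\mathcal{O}}_{x,y_{i}^{\prime}})=\ker(\partial_{y_{i}^{\prime}})$; and Lemma~\ref{Lemma_GerstenForGenericFormalFiber} identifying $M_{\ast}(\widehat{\mathcal{O}}_{x}\otimes_{\mathcal{O}}K)=\ker(M_{\ast}(\widehat{K}_{x})\rightarrow\coprod_{\widehat{y}}M_{\ast-1}(\kappa(\widehat{y})))$, the coproduct over transcendental primes. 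Recall from Lemma~\ref{Lemma_ComparePrimesAfterCompletion} that the height $1$ primes of $R$ split disjointly into transcendental $\widehat{y}$ and non-transcendental $y_{i}^{\prime}$. Throughout I invoke axiom \textbf{R3a}: the boundary maps are compatible with $\operatorname{res}^{\widehat{K}_{x,y_{i}^{\prime}}}_{\widehat{K}_{x}}$, since completing the discrete valuation ring $(\widehat{\mathcal{O}}_{x})_{y_{i}^{\prime}}$ is flat of relative dimension $0$ and induces an isomorphism on residue fields.

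Injectivity and middle exactness are then formal. The $M_{\ast}(\widehat{\mathcal{O}}_{x}\otimes_{\mathcal{O}}K)$-component of the first map is the inclusion of subgroups of $M_{\ast}(\widehat{K}_{x})$, hence injective. If $(\alpha,(\beta_{y_{i}^{\prime}}))$ is a cycle, then $\operatorname{res}^{\widehat{K}_{x,y_{i}^{\prime}}}_{\widehat{K}_{x}}\alpha=\beta_{y_{i}^{\prime}}\in M_{\ast}(\widehat{\mathcal{O}}_{x,y_{i}^{\prime}})$, so $\partial_{y_{i}^{\prime}}\alpha=0$ by \textbf{R3a}; combined with $\partial_{\widehat{y}}\alpha=0$ (as $\alpha\in M_{\ast}(\widehat{\mathcal{O}}_{x}\otimes_{\mathcal{O}}K)$) this gives $\partial_{z}\alpha=0$ for every height $1$ prime $z$ of $R$, i.e.\ $\alpha\in M_{\ast}(\widehat{\mathcal{O}}_{x})$ by the definition eq.~\ref{l322_DefOfUnramCycles}, and $(\alpha,(\beta_{y_{i}^{\prime}}))$ is visibly its image.

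The surjectivity step carries the content. Given $(\gamma_{y_{i}^{\prime}})$ in the target, the family $(\partial_{y_{i}^{\prime}}\gamma_{y_{i}^{\prime}})_{y_{i}^{\prime}}$ is a \emph{finite} element of $\coprod_{y_{i}^{\prime}}M_{\ast-1}(\kappa(y_{i}^{\prime}))$ by the restricted-product condition; regard it as an element of $C^{1}(\operatorname{Spec}R,M_{\ast})$ by inserting $0$ in every transcendental slot. Its image in $C^{2}(\operatorname{Spec}R,M_{\ast})=M_{\ast-2}(\kappa(x))$ is exactly $\sum_{y_{i}^{\prime}}\partial_{x}^{y_{i}^{\prime}}\partial_{y_{i}^{\prime}}\gamma_{y_{i}^{\prime}}$, which vanishes by the reciprocity constraint. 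By exactness of the Rost complex of $\operatorname{Spec}R$ (Prop.~\ref{Prop_GerstenConjForRostComplexCompleteVersion}) it is therefore $\partial\alpha$ for some $\alpha\in M_{\ast}(\widehat{K}_{x})$; since the transcendental slots were filled with $0$, this $\alpha$ satisfies $\partial_{\widehat{y}}\alpha=0$ for all transcendental $\widehat{y}$, hence $\alpha\in M_{\ast}(\widehat{\mathcal{O}}_{x}\otimes_{\mathcal{O}}K)$ by Lemma~\ref{Lemma_GerstenForGenericFormalFiber}, while $\partial_{y_{i}^{\prime}}\alpha=\partial_{y_{i}^{\prime}}\gamma_{y_{i}^{\prime}}$. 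Putting $\beta_{y_{i}^{\prime}}:=\operatorname{res}^{\widehat{K}_{x,y_{i}^{\prime}}}_{\widehat{K}_{x}}\alpha-\gamma_{y_{i}^{\prime}}$, we get $\partial_{y_{i}^{\prime}}\beta_{y_{i}^{\prime}}=\partial_{y_{i}^{\prime}}\alpha-\partial_{y_{i}^{\prime}}\gamma_{y_{i}^{\prime}}=0$ by \textbf{R3a}, so $\beta_{y_{i}^{\prime}}\in M_{\ast}(\widehat{\mathcal{O}}_{x,y_{i}^{\prime}})$, and $d(\alpha,(\beta_{y_{i}^{\prime}}))=(\gamma_{y_{i}^{\prime}})$ (if the sign convention is the opposite, use $\beta_{y_{i}^{\prime}}:=\gamma_{y_{i}^{\prime}}+\operatorname{res}^{\widehat{K}_{x,y_{i}^{\prime}}}_{\widehat{K}_{x}}\alpha$ instead). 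The only thing to record separately --- routine, as in Lemma~\ref{Lemma_MapsAreWellDefined1}, from axiom \textbf{FD} and $\partial^{2}=0$ --- is that $d$ indeed lands in the restricted product with the reciprocity constraint, so that the statement is meaningful. I expect the genuine obstacle to be conceptual rather than computational: one must recognise that the reciprocity constraint $\sum_{y_{i}^{\prime}}\partial_{x}^{y_{i}^{\prime}}\partial_{y_{i}^{\prime}}\gamma_{y_{i}^{\prime}}=0$ is \emph{precisely} the obstruction to lifting $(\partial_{y_{i}^{\prime}}\gamma_{y_{i}^{\prime}})$ through the Rost complex of $\widehat{\mathcal{O}}_{x}$, and that padding the transcendental slots with zero forces the lift into the smaller group $M_{\ast}(\widehat{\mathcal{O}}_{x}\otimes_{\mathcal{O}}K)$ --- exactly what is required for it to be an admissible input to $d$.
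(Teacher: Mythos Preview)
Your proof is correct and follows essentially the same route as the paper's: injectivity via the inclusion $M_{\ast}(\widehat{\mathcal{O}}_{x})\hookrightarrow M_{\ast}(\widehat{\mathcal{O}}_{x}\otimes K)$ as subgroups of $M_{\ast}(\widehat{K}_{x})$, middle exactness by combining $\partial_{y_{i}^{\prime}}\alpha=0$ (via \textbf{R3a}) with $\partial_{\widehat{y}}\alpha=0$ to force $\alpha\in M_{\ast}(\widehat{\mathcal{O}}_{x})$, and surjectivity by padding $(\partial_{y_{i}^{\prime}}\gamma_{y_{i}^{\prime}})$ with zeros in the transcendental slots and lifting through the Rost complex of $\widehat{\mathcal{O}}_{x}$ using Prop.~\ref{Prop_GerstenConjForRostComplexCompleteVersion}. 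The only difference is cosmetic: you make the reduction to a single closed point $x$ explicit at the outset, whereas the paper works factorwise in $x$ without stating this separately.
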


In other words, the top face of our tricomplex is quasi-isomorphic to the
sheaf $\coprod\nolimits_{x}M_{\ast -2}(\kappa (x))$, placed in degree two.

\begin{proof}
The proof is very similar to the one of the previous lemma. Injectivity is
clear since $\widehat{\mathcal{O}}_{x}$ and $\widehat{\mathcal{O}}%
_{x}\otimes K$ both have field of fractions $\widehat{K}_{x}$. For exactness
in the middle, suppose we are given $\alpha _{x}\in M_{\ast }(\widehat{%
\mathcal{O}}_{x}\otimes K)$ and $\alpha _{x,y_{i}^{\prime }}\in M_{\ast }(%
\widehat{\mathcal{O}}_{x,y_{i}^{\prime }})$ going to zero on the right. This
unwinds as the cocycle condition%
\begin{equation}
\forall x,y_{i}^{\prime }:\limfunc{res}\nolimits_{\widehat{K}_{x}}^{\widehat{%
K}_{x,y_{i}^{\prime }}}\alpha _{x}-\alpha _{x,y_{i}^{\prime }}=0\in M_{\ast
}(\widehat{K}_{x,y_{i}^{\prime }})  \label{l372}
\end{equation}%
and applying the boundary $\partial _{y_{i}^{\prime }}^{\widehat{K}%
_{x,y_{i}^{\prime }}}$ yields $\limfunc{res}\nolimits_{\kappa (y^{\prime
})}^{\kappa (y^{\prime })}\partial _{y_{i}^{\prime }}^{\widehat{K}%
_{x}}\alpha _{x}=\partial _{y_{i}^{\prime }}^{\widehat{K}_{x,y_{i}^{\prime
}}}\alpha _{x,y_{i}^{\prime }}$. We conclude $\partial _{y_{i}^{\prime }}^{%
\widehat{K}_{x}}\alpha _{x}=0$. The height one primes of $\widehat{\mathcal{O%
}}_{x}$ disjointly decompose into transcendental $\widehat{y}$ and
non-transcendental $y_{i}^{\prime }$ curves. Since $\alpha _{x}\in M_{\ast }(%
\widehat{\mathcal{O}}_{x}\otimes K)$ we therefore know that the boundaries
at \textit{all} height one primes of $\widehat{\mathcal{O}}_{x}$ vanish and
thus $\alpha _{x}\in M_{\ast }(\widehat{\mathcal{O}}_{x})$. Finally, eq. \ref%
{l372} shows that $\alpha _{x}$ maps to $\alpha _{x,y_{i}^{\prime }}$ under
the first arrow, showing that we have found a preimage. To see surjectivity,
suppose we are given $\alpha _{x,y_{i}^{\prime }}\in M_{\ast }(\widehat{K}%
_{x,y_{i}^{\prime }})$. By the reciprocity constraint and the exactness of
the cycle complex of $\limfunc{Spec}\widehat{\mathcal{O}}_{x}$,%
\begin{equation*}
\begin{xy} \xymatrix{ \widehat{C}_{\bullet }: & M_{\ast
}(\widehat{\mathcal{O}}_{x}) \ar[r] & M_{\ast }(\widehat{K}_{x}) \ar[r] &
{\coprod\nolimits_{\tilde{y}}M_{\ast -1}(\kappa (\tilde{y}))} {
\save[]+<0cm,0.5cm>*{{\partial _{y_{i}^{\prime }}{\alpha}_{x,y_{i}^{\prime
}}} \in} \restore} \ar[r] & M_{\ast -2}(\kappa (x)) \ar[r] & 0 } \end{xy}
\end{equation*}%
we obtain a preimage $f\in M_{\ast }(\widehat{K}_{x})$. Since by
construction $\partial _{\widehat{y}}f=0$ at transcendental curves $\widehat{%
y}$ in $\widehat{\mathcal{O}}_{x}$, Prop. \ref{prop_alllocalidelesgersten}
tells us that $f\in M_{\ast }(\widehat{\mathcal{O}}_{x}\otimes K)$. It is
now obvious that $\beta _{x,y_{i}^{\prime }}:=\limfunc{res}\nolimits_{%
\widehat{K}_{x}}^{\widehat{K}_{x,y_{i}^{\prime }}}f-\alpha _{x,y_{i}^{\prime
}}\in M_{\ast }(\widehat{\mathcal{O}}_{x,y_{i}^{\prime }})$ and that $%
f\oplus \beta _{x,y_{i}^{\prime }}$ provides a preimage.
\end{proof}

Combining the two lemmata, we may quasi-isomorphically replace the top and
bottom face of our tricomplex so that we arrive at%
\begin{equation*}
\begin{xy} \xymatrix@!0@C=24mm@R=9mm{ & 0 \ar[dl] & & 0 \ar[dl] \ar[ll] \\
{\coprod_{x}M_{\ast -2}(\kappa (x))} & & 0 \ar [ll] & \\ & 0 \ar[dl] \ar
'[u][uu] & & 0 \ar[dl] \ar'[l][ll] \ar[uu] \\ {\coprod_{x}M_{\ast -2}(\kappa
(x))} \ar [uu]_{\cong} & & 0 \ar[uu] \ar[ll] & } \end{xy}
\end{equation*}%
A direct inspection reveals that the morphism along the front left edge is
indeed an isomorphism. However, this immediately implies that the total
complex of the tricomplex is acyclic. This finishes the proof of Thm. \ref%
{marker_MAINTHM}.\ref{mainthmp1}. The existence of some formula as in the
claim of Thm. \ref{marker_MAINTHM}.\ref{mainthmp2} is clear since the
flasque resolution property already implies \textit{abstractly} that%
\begin{equation*}
H^{i}(X,\mathbf{I}_{\mathcal{M}}^{\bullet })\overset{\cong }{\longrightarrow 
}A^{i}(X,M_{\ast })\text{.}
\end{equation*}%
One then finds the concrete formula by explicitly making the diagram chase
underlying this isomorphism (one just needs to go through the proofs of
Lemma \ref{Lemma_FrontAdeles2DAreExact} and Lemma \ref%
{Lemma_BackAdeles2DAreExact} again and keep track of the individual steps).

\begin{remark}
(Comparison with Gorchinskiy's theory) Statement and comparison maps in eq. %
\ref{lX_10} are entirely analogous to Gorchinskiy's comparison maps $\nu
_{\ast }$ \cite{MR2354210}, \cite[Thm. 1.1 and map in Prop. 2.16]{MR2489487}
for uncompleted ad\`{e}les. In \emph{loc. cit.} the Gersten complex is
called `Cousin complex' and the boundary maps are called `residue maps' as
in the analogue of the theory for quasi-coherent sheaves \cite{MR0222093}.
\end{remark}

\begin{remark}
\label{h1difffromintro}The sheaves $\mathbf{I}_{\mathcal{M}}^{\bullet }$ are
different from the $\mathbf{I},\mathbf{I}^{0}$ appearing in the introduction
in \S \ref{section_Motivation}. Using $K$-theory as a cycle module the above
yields a canonical isomorphism $\limfunc{CH}\nolimits^{1}(X)\cong H^{1}(X,%
\mathcal{K}_{1})\cong H^{1}(\mathbf{I}_{\mathcal{K},1}^{\bullet })$ (the
subscript `$1$' refers to taking degree one in the grading of the cycle
module). There is no harm using this isomorphism instead of the one in \S %
\ref{section_Motivation}. There is \emph{no way} the isomorphism in \S \ref%
{section_Motivation} generalizes to arbitrary cycle modules since eq. \ref%
{lX_3} is a resolution concentrated in degrees $[0,1]$ and if it generalized
this would imply $H^{2}(X,\mathcal{M}_{\ast })=0$ for all $\mathcal{M}_{\ast
}$, which is just false.
\end{remark}

As in \S \ref{section_IdelesNoProduct}, let $X/k$ be an integral smooth
surface over an arbitrary field. Let $M_{\ast }$ be a big cycle module with
a product, a pairing on itself as defined in \cite[Def. 2.1]{MR1418952}.
Such a pairing consists of a bilinear pairing of abelian groups%
\begin{equation}
\cdot :M_{p}\left( F\right) \times M_{q}\left( F\right) \longrightarrow
M_{p+q}\left( F\right) \text{.}  \label{l409}
\end{equation}%
Certain axioms \textbf{P1}-\textbf{P3} \cite{MR1418952} need to be
fulfilled. Such a pairing induces an associated pairing of cycle cohomology
groups.

\begin{example}
(Products) $M_{\ast }=K_{\ast }^{M}$ has such a pairing, just given by the
ordinary product in the Milnor $K$-theory ring. Similarly, Quillen $K$%
-theory and Galois cohomology with $\mathbf{Z}/\ell $ coefficients with its
cup product, $\ell $ coprime to $\limfunc{char}k$; $M_{i}(F):=H^{i}(F,%
\mathbf{Z}/\ell (i))$.
\end{example}

Given any such product, we obtain a graded-commutative product on the cycle
cohomology groups%
\begin{equation*}
A^{i}(X,M_{q})\otimes A^{j}(X,M_{r})\longrightarrow A^{i+j}(X,M_{q+r})\text{.%
}
\end{equation*}%
Using Milnor or Quillen $K$-theory as a cycle module, one has $%
A^{i}(X,K_{i})=\limfunc{CH}\nolimits^{i}(X)$ and the bidegree $(i,i)$
excerpt of the above product agrees with the usual commutative product of
the Chow ring. Feeding this into the main theorem, Thm. \ref{INTRO_MAINTHM},
we obtain a version of the commutative square in Fig. \ref{lX_5} in the
introduction \S \ref{section_Motivation}.%
\begin{equation*}
\begin{xy} \xymatrix{ \limfunc{CH}\nolimits^{1}(X) \otimes _{\mathbf{Z}}
\limfunc{CH}\nolimits^{1}(X) \ar[r] \ar[d] & \limfunc{CH}\nolimits^{2}(X)
\ar[d] \\ H^{1}(X,\mathbf{I}_{\mathcal{M}}^{\bullet })\otimes
_{\mathbf{Z}}H^{1}(X,\mathbf{I}_{\mathcal{M}}^{\bullet }) \ar[r]_-{\ast} &
H^{2}(X,\mathbf{I}_{\mathcal{M}}^{\bullet }) \\ } \end{xy}
\end{equation*}%
Note that it is actually slightly different from the one in \S \ref%
{section_Motivation}. The cohomology group $H^{1}(X,\mathbf{I}_{\mathcal{K}%
}^{\bullet })$ comes from a more complicated presentation than in \S \ref%
{section_Motivation}, but as already explained in Rmk. \ref{h1difffromintro}
this cannot be avoided if one wants a uniform id\`{e}le resolution for all
cycle module sheaves.

\begin{example}
(S.\ Gorchinskiy) I thank S. Gorchinskiy for communicating this example to
me. It would be desirable to lift the product $(\ast )$, which we have
essentially avoided to construct from scratch by transporting it from $%
A^{\ast }(X,M_{\ast })$, to a product defined on the id\`{e}le resolution
itself. The most straightforward definition would be%
\begin{equation}
\mathbf{I}^{p}\otimes _{\mathbf{Z}}\mathbf{I}^{q}\longrightarrow \mathbf{I}%
^{p+q}\text{;\quad }(e\otimes f)^{z_{0}\ldots z_{p+q}}:=\limfunc{res}%
\nolimits_{\ast }^{\ast }e^{z_{0}\ldots ,z_{p}}\cdot \limfunc{res}%
\nolimits_{\ast }^{\ast }f^{z_{p}\ldots z_{p+q}}  \label{l145}
\end{equation}%
where $\limfunc{res}\nolimits_{\ast }^{\ast }$ comes from the flat pullback
along the respective arrow in Fig. \ref{lX_200}. However, this definition
does \emph{not} work: Consider affine $2$-space $X:=\limfunc{Spec}k[s,t]$
and take Milnor $K$-theory as the cycle module. Then define%
\begin{equation*}
e_{y}^{01}:=\left\{ 
\begin{array}{cl}
s & \text{for }y=(s) \\ 
1 & \text{otherwise}%
\end{array}%
\right. \qquad f_{x,y}^{12}:=\left\{ 
\begin{array}{cl}
t-\alpha & \text{for }x=(s,t-\alpha )\text{, }\alpha \in k \\ 
& \text{and }y=(s)\text{.} \\ 
1 & \text{otherwise}%
\end{array}%
\right.
\end{equation*}%
in degree one (i.e. $K_{1}^{M}(F)=F^{\times }$). These id\`{e}les meet the
finiteness conditions of \S \ref{section_IdelesNoProduct}. However, $%
e\otimes f$ does not meet the finiteness condition of Def. \ref%
{recicircleconditionDef}.\ref{recicirclecondition}. And indeed an evaluation
of the comparison map in eq. \ref{lX_11} yields%
\begin{equation*}
\sum_{y}(\partial _{x}^{y}\circ \partial _{y}^{\widehat{K}_{x,y}})(e\otimes
f)_{x,y}=\partial _{(t-\alpha )}\partial _{(s)}\{s,t-a_{\alpha }\}=+1
\end{equation*}%
for \emph{all} points $x=(s,t-\alpha )$ ($\alpha $ arbitrary). This does not
lie in $\coprod_{x\in X^{(2)}}\mathbf{Z}$ (unless $k$ is finite). We may
even allow all $\alpha \in k[s]$. It is still reasonable to believe that
there exists a subcomplex of $\mathbf{I}_{\mathcal{M}}^{\bullet }$ on which
eq. \ref{l145} exhibits a well-defined product. Gorchinskiy constructed such
a subcomplex in his theory of adically non-completed id\`{e}les \cite%
{MR2354210}, \cite{MR2489487}.
\end{example}

\section{\label{section_AppendixCompleteGersten}Appendix: Gersten property
for complete rings}

Finally, we need to prove Prop. \ref%
{Prop_GerstenConjForRostComplexCompleteVersion}, which we repeat for
convenience:

\begin{proposition}
\label{Prop_GerstenConjForRostComplexCompleteVersion2}Suppose $X/k$ is an
integral equicharacteristic complete regular local scheme and let $M_{\ast }$
be a big cycle module as in \S \ref{section_CycleModules}. Then the complex $%
C^{\bullet }\left( X,M_{\ast }\right) $ is exact.
\end{proposition}

This crucial technical fact is independent of the rest of the paper.

We shall use the following lemma, variations of which occur in most proofs
of Noether normalization.

\begin{lemma}
\label{Lemma_NagataGoodCoordinatesForWDistinguished}(Nagata) Let $F$ be a
field. Suppose $f\in F[[w_{1},\ldots ,w_{n}]]$ is non-zero. Then there is a
ring automorphism $\sigma $ (fixing $F$) of the shape%
\begin{equation*}
w_{n}\mapsto w_{n}\text{\qquad }w_{i}\mapsto w_{i}+w_{n}^{c_{i}}
\end{equation*}%
for suitable $c_{i}\in \mathbf{Z}_{>0}$ and $i=1,\ldots ,n-1$ such that $%
\sigma f\in F[[w_{1},\ldots ,w_{n}]]$ is $w_{n}$-distinguished (i.e. the
coefficients as a power series in $R[[w_{n}]]$ with $R=F[[w_{1},\ldots
,w_{n-1}]]$ do not all lie in the ideal $(w_{1},\ldots ,w_{n-1})$ of $R$).
\end{lemma}

\begin{proof}
\cite[see proof of Prop. 1 in \S 5.2.4]{MR746961}.\medskip
\end{proof}

\begin{proof}
(roughly follows the method of \cite[Thm. 5.13]{MR0338129}) Write $X=%
\limfunc{Spec}R$ with $\left( R,\mathfrak{m}\right) $ an equicharacteristic
complete regular local domain of dimension $n$. We can assume $n\geq 1$ as
the case $n=0$ is trivial. The injectivity $M_{\ast }\left( X\right)
\hookrightarrow C^{0}\left( X,M_{\ast }\right) $ follows from the very
definition of $M_{\ast }\left( X\right) $ as a kernel, so we only need to
show that given some $\alpha =\left( \alpha _{y}\right) _{y\in X^{\left(
p+1\right) }}$ ($p\geq 0$) as in the middle term of%
\begin{equation*}
\cdots \longrightarrow \coprod_{x\in X^{\left( p\right) }}M_{\ast }\left(
\kappa \left( x\right) \right) \overset{\partial _{X}}{\longrightarrow }%
\coprod_{y\in X^{\left( p+1\right) }}M_{\ast -1}\left( \kappa \left(
y\right) \right) \overset{\partial _{X}}{\longrightarrow }\coprod_{z\in
X^{\left( p+2\right) }}M_{\ast -2}\left( \kappa \left( z\right) \right)
\end{equation*}%
such that $\partial _{X}\alpha =0$, there exists some $\beta =\left( \beta
_{x}\right) _{x\in X^{\left( p\right) }}$ with $\partial _{X}\beta =\alpha $%
. We will do this by picking a suitable closed subscheme $Y$ on which $%
\alpha $ is supported and show that the pushforward $i_{\ast }$ along $%
Y\hookrightarrow X$ is homotopic to zero:

For given $\alpha $ pick a closed subscheme $Y$ of $X$ of pure codimension
one containing the (finitely many) closed subsets $\overline{\{y\}}$ such
that $\alpha _{y}\neq 0$. As $R\ $is regular, $Y$ is cut out from $X$ by a
principal divisor, say $\omega \in R$, $Y=\limfunc{Spec}R/(\omega )$. By
Cohen's Structure Theorem there is a (non-canonical) ring isomorphism%
\begin{equation*}
R\simeq \kappa \left( \mathfrak{m}\right) [[w_{1},\ldots ,w_{n}]]\text{.}
\end{equation*}%
After possibly changing the isomorphism by Lemma \ref%
{Lemma_NagataGoodCoordinatesForWDistinguished} we can assume that $\omega $
is distinguished. Then by the Weierstra\ss\ Preparation Theorem \cite[Thm. 1
in \S 5.2.2]{MR746961} there is a unit $u\in R^{\times }$ and a Weierstra%
\ss\ polynomial $f\in \kappa \left( \mathfrak{m}\right) [[w_{1},\ldots
,w_{n-1}]][\tilde{w}_{n}]$ such that $\omega =u\cdot f$. Hence, w.l.o.g. $%
\omega =f$ as this generates the same ideal. Next, we mimick the diagram of 
\cite[\textit{proof of} Prop. 6.4.]{MR1418952}%
\begin{equation*}
\begin{xy} \xymatrix{ & & Z \ar[dr]^{\pi} \ar[dl]_g & \\ & Y \ar[rr]^i
\ar[dr] & & X \ar [dl] \\ & & A, & \\ } \end{xy}
\end{equation*}%
where in our setup

\begin{itemize}
\item $Y:=\limfunc{Spec}R/(\omega )=\limfunc{Spec}R/(f)$;

\item $X:=\limfunc{Spec}R=\limfunc{Spec}\kappa \left( \mathfrak{m}\right)
[[w_{1},\ldots ,w_{n}]]$;

\item $A:=\limfunc{Spec}\kappa \left( \mathfrak{m}\right) [[w_{1},\ldots
,w_{n-1}]]$;

\item $Z:=Y\times _{A}X$.
\end{itemize}

Here $g$, $\pi $ denote the product projections. The arrow $X\rightarrow A$
is not smooth (unlike its counterpart in \cite[\textit{proof of} Prop. 6.4.]%
{MR1418952}), but still flat of constant relative dimension one. The arrow $%
Y\rightarrow A$ is a finite morphism since by our Weierstra\ss\ Preparation
we have%
\begin{equation*}
\mathcal{O}_{Y}=\kappa \left( \mathfrak{m}\right) [[w_{1},\ldots ,w_{n-1}]][%
\tilde{w}_{n}]/\left( f\left( \tilde{w}_{n}\right) \right) \text{.}
\end{equation*}%
The arrow $i:Y\rightarrow X$ is a closed immersion. Finally, note that%
\begin{align}
\mathcal{O}_{Z}& =\kappa \left( \mathfrak{m}\right) [[w_{1},\ldots
,w_{n-1}]][\tilde{w}_{n}]/\left( f\left( \tilde{w}_{n}\right) \right) \ldots
\label{l329} \\
& \qquad \ldots \otimes _{\kappa \left( \mathfrak{m}\right) [[w_{1},\ldots
,w_{n-1}]]}\kappa \left( \mathfrak{m}\right) [[w_{1},\ldots
,w_{n-1}]][[w_{n}]]  \notag \\
& =\kappa \left( \mathfrak{m}\right) [[w_{1},\ldots ,w_{n-1},w_{n}]][\tilde{w%
}_{n}]/\left( f\left( \tilde{w}_{n}\right) \right) \text{.}  \notag
\end{align}%
We get a closed immersion $\sigma $ coming from the diagonal ideal $\left(
w_{n}-\tilde{w}_{n}\right) $ in $\mathcal{O}_{Z}$. This realizes $Y$ as the
underlying closed subscheme of $Z$ of a principal divisor. We let $V$ be the
open complement of the closed subset $\sigma \left( Y\right) $ in $Z$ so
that $Z=V\cup \sigma \left( Y\right) $ disjointly. Algebraically,%
\begin{equation*}
\mathcal{O}_{V}=\kappa \left( \mathfrak{m}\right) [[w_{1},\ldots
,w_{n-1},w_{n}]][[\tilde{w}_{n}]]/\left( f\left( \tilde{w}_{n}\right)
\right) [\tfrac{1}{w_{n}-\tilde{w}_{n}}]\text{.}
\end{equation*}%
Since $f$ is a nonzerodivisor, $\dim Z=n$, and since $w_{n}-\tilde{w}_{n}\in 
\mathfrak{m}$, the maximal ideal gets killed after inverting $w_{n}-\tilde{w}%
_{n}$.\ The resulting maximal primes correspond bijectively to those primes
of $\mathcal{O}_{Z}$ maximal with the property not to contain $w_{n}-\tilde{w%
}_{n}$; and these are all one-dimensional. Hence, $\dim V=n-1$, and $V$ is
usually \textit{not local}! Note that the open subscheme $V$ has strictly 
\textit{lower} dimension than $Z$ (this is one of the peculiarities of local
rings). Let $g^{\prime }:V\rightarrow Y$ be the restriction of the
projection $Z\rightarrow Y$ to the open subscheme $V$. This morphism is flat
of constant relative dimension zero (whereas the full $g:Z\rightarrow Y$ has
constant relative dimension one). Define%
\begin{equation*}
H:=\pi _{\ast }\circ j_{\ast }\circ \{w_{n}-\tilde{w}_{n}\}\circ g^{\prime
\ast }
\end{equation*}%
\begin{equation*}
C^{p}\left( Y,M_{\ast }\right) \rightarrow C^{p}(V,M_{\ast })\rightarrow
C^{p}(V,M_{\ast +1})\rightarrow C^{p}\left( Z,M_{\ast +1}\right) \rightarrow
C^{p}\left( X,M_{\ast +1}\right) \text{,}
\end{equation*}%
where

\begin{itemize}
\item $g^{\prime \ast }$ is the flat pullback of constant relative dimension
one;

\item $\{w_{n}-\tilde{w}_{n}\}$ denotes the left-multiplication by units on $%
C^{\bullet }$;

\item $j:V\rightarrow Z$ is the open immersion (and flat of constant
relative dimension zero) and $j_{\ast }$ is a \textit{non}-proper
pushforward along this open immersion. \textit{Beware:} As $j$ is not
proper, $\partial \circ j_{\ast }\neq j_{\ast }\circ \partial $.

\item $\pi :Z\rightarrow X$ is the product projection. It is a \textit{finite%
} morphism, one sees this by direct inspection of eq. \ref{l329} or
abstractly by base change from the finiteness of $Y\rightarrow A$.
\end{itemize}

As for the proof of \cite[Prop. 6.4]{MR1418952} one computes that $H$ is a
chain homotopy between the closed immersion pushforward $i_{\ast }$ and the
zero morphism:%
\begin{equation}
\partial _{X}\circ H+H\circ \partial _{Y}=i_{\ast }-0:C^{p}(Y,M_{\ast
})\rightarrow C^{p+1}(X,M_{\ast +1})\text{.}  \label{l7006}
\end{equation}%
This completes the proof.
\end{proof}

One can probably prove a version for all equicharacteristic regular local
rings using Panin's method via N\'{e}ron-Popescu desingularization \cite%
{MR2024050}. However, then one needs the cocontinuity property $M_{\ast }(%
\limfunc{colim}F_{i})\cong \limfunc{colim}M_{\ast }(F_{i})$ which holds for $%
K$-theory, but need \textit{not} hold for big cycle modules without adding
further axioms. There are interesting big cycle modules without this
property: Define%
\begin{equation*}
"\left. K_{n}^{\limfunc{top}}(F)\right. ":=K_{n}^{M}(F)/\left( \text{%
divisible elements}\right) \text{.}
\end{equation*}%
This is a big cycle module. The name is inspired from Fesenko's topological
Milnor $K$-groups \cite{MR1850194}. They appear in local class field theory,
but are only defined for higher local fields. The link stems from:

\begin{theorem}
\label{ThmFesenkoAlgTopKM}(Fesenko \cite[Thm. 4.7 (iv)]{MR1850194}) Let $F$
be an $n$-local field with $\limfunc{char}F=p>0$ and last residue field
algebraic over $\mathbf{F}_{p}$. Then $K_{n}^{\limfunc{top}}(F)\cong
K_{n}(F)/\left( \text{divisible elements}\right) $ as abstract groups.
\end{theorem}

It is now easy to obtain the version of eq. \ref{lX_101} for $K^{\limfunc{top%
}}$ by its counterpart for ordinary Milnor $K$-theory.

\begin{acknowledgements}
{I would like to thank P. Arndt, B. Kahn and A. Vishik for
answering some questions I had. Moreover, K. Ardakov
for spotting a mistake which then led to various simplifications. F. Trihan,
M. Morrow and I. Fesenko for their careful reading of earlier versions of this
text and very useful conversations.}
\end{acknowledgements}

\bibliographystyle{amsalpha}
\bibliography{idelobib}

\end{document}